\newtheorem{thm}{\bf Theorem}
\begin{document}

\title{Stability analysis and optimal control of a fractional HIV-AIDS epidemic model 
with memory and general incidence rate}

\titlerunning{Stability analysis and optimal control of a fractional HIV-AIDS epidemic model}

\author{Adnane Boukhouima\inst{1}\\
\email{adnaneboukhouima@gmail.com}\\[0.2cm]
El Mehdi Lotfi\inst{1}\\
\email{lotfiimehdi@gmail.com}\\[0.2cm]
Marouane Mahrouf\inst{1}\\
\email{marouane.mahrouf@gmail.com}\\[0.2cm]
Silv\'{e}rio Rosa\inst{2}\\
\email{rosa@ubi.pt}\\[0.2cm]
Delfim F. M. Torres\inst{3}\\
\email{delfim@ua.pt}\\[0.2cm]
Noura Yousfi\inst{1}\\
\email{nourayousfi@hotmail.com}}

\authorrunning{A. Boukhouima et al.}

\mail{Delfim F. M. Torres, \email{delfim@ua.pt}}

\institute{Laboratory of Analysis, Modeling and Simulation (LAMS),
Faculty of Sciences Ben M'sik, Hassan II University,
P.O Box 7955 Sidi Othman, Casablanca, Morocco
\and
Instituto de Telecomunica\c{c}\~{o}es and Department of Mathematics,
Universidade da Beira Interior, 6201-001 Covilh\~{a}, Portugal
\and
Center for Research and Development in Mathematics and Applications (CIDMA),
Department of Mathematics, University of Aveiro, 3810-193 Aveiro, Portugal}

\date{Submitted: 02-Feb-2020 / Revised: 20-Oct-2020 and 24-Nov-2020 / Accepted: 08-Dec-2020}


\abstract{We investigate the celebrated mathematical
SICA model but using fractional differential equations
in order to better describe the dynamics of HIV-AIDS infection. 
The infection process is modelled by a general functional response 
and the memory effect is described by the Caputo fractional derivative. 
Stability and instability of equilibrium points are determined 
in terms of the basic reproduction number. 
Furthermore, a fractional optimal control
system is formulated and the best strategy for minimizing
the spread of the disease into the population
is determined through numerical simulations based
on the derived necessary optimality conditions.
\keywords{HIV-AIDS infection; Fractional calculus;
Fractional differential equations; Stability.}
\PACS{{02.30.Hq}{Ordinary differential equations}
\and
{02.30.Yy}{Control theory}
\and
{02.60.−x}{Numerical approximation and analysis}
\and
{87.00.00}{Biological and medical physics}
\and
{87.19.Xx}{Diseases}
\and
{87.23.Kg}{Dynamics of evolution}.}}

\maketitle


\section{Introduction}

Acquired Immune Deficiency Syndrome (AIDS) is a chronic infectious disease
caused by the human immunodeficiency virus (HIV). The virus attack and
destruct the immune response system, which plays a crucial role to defend
the human body against viral pathogens. The last statistics of
The Joint United Nations Programme on HIV and AIDS (UNAIDS)
show that 36.9 million people were living with HIV, where 21.7
million individuals were accessing anti-retro-viral therapy (ART)
and 1.8 million became newly infected with HIV \cite{UNAIDS}.
Therefore, the world is now facing huge challenges and should
be committed to provide appropriate preventive strategies
in order to control the AIDS epidemic process.

In the course of history, mathematical modelling of natural phenomena,
described by ordinary differential equations (ODE), has proven valuable
in analysing various diseases dynamics, such as HIV/AIDS, Malaria,
and Tuberculosis, and also plays an important role in better understanding
the global behaviour of epidemiological models. However, memory has a crucial
role in the evolution and control of any epidemic process. The experience
or knowledge of people about the spread of a disease in the past,
affects their response. If people know about the history of a certain disease
in their environment, then they may use different precautions, such as vaccination
and treatment \cite{Saeedian}. Consequently, incorporating memory seems
very appropriate to study such epidemic models. This is done here through
the use of fractional differentiation.

Fractional derivatives, which provide a generalization of the integer order derivative
to an arbitrary order, has become an adequate mathematical tool to characterize
the memory effect of complex systems. This particular property is neglected
by integer order derivatives, which explains why fractional calculus
is nowadays widely applied to model various dynamical processes in different fields
of science and engineering, such as mechanics, image processing, viscoelasticity,
bioengineering, finance, psychology, and biology
\cite{Rossikhin,Marks,Jia,Magin,Scalas1,Song1,Cole,Agarwal,AGARWAL2020122769}.
The advantage of using systems of fractional differential equations (FDE) over ODE systems
is that they provide an excellent tool for the description of memory and hereditary properties.
Indeed, the fractional derivative is a non-local operator, in contrast with the integer derivative,
which means that if we want to compute a fractional derivative at some point $t = t_{1}$
we need all its history from the starting point $t = t_{0}$ up to the point $t = t_{1}$.
Furthermore, another feature of FDE is that their stability region is larger than ODE,
which can help to reduce errors arising from the neglected parameters
in modelling real life problems \cite{Rihan}.

Since modelling of dynamic systems by ODE cannot precisely describe experimental
and measurement data \cite{Sun}, FDE are now being extensively applied to study
and control the dynamics of infectious diseases. In \cite{Rihan1}, Rihan et al.
analyse a fractional model for HCV dynamics in presence of interferon-$ \alpha$ (IFN)
treatment. According to numerical simulations and the real data, they also confirm
that the FDE are better descriptors of HCV systems than ODE. In the study
of Arafa et al. \cite{Arafa}, the authors compared between the results
of the fractional order model, the results of the integer model,
and the measured real data obtained from 10 patients during primary HIV infection,
and they proved that the results of the fractional order model give better
predictions to the plasma virus load of the patients than those of the integer order model.
In the work of Wojtak et al. \cite{Wojtak}, the authors investigate the
uniform asymptotic stability of the unique endemic equilibrium for a Caputo
fractional-order tuberculosis (TB) model. They confirm that the proposed
fractional-order model provides richer and more flexible results when
compared with the corresponding integer-order TB model. In \cite{Parra},
the authors propose a non-linear fractional-order model to explain and
understand the outbreaks of influenza A (H1N1) worldwide. They show
that the fractional-order model gives wider peaks and leads to better
approximations for the real epidemic data. The authors in \cite{Pooseh}
propose a fractional-order model and show, through numerical simulations,
that the fractional models fit better the first dengue epidemic recorded
in the Cape Verde islands off the coast of West Africa when compared
with ODE models. For fractional optimal control problems (FOCP),
we can cite the work of Sweilam et al., where a fractional optimal control
model for tuberculosis infection, including the impact of diabetes
and resistant strains, is studied \cite{Sweilam}. Also, we mention the study
of Rosa and Torres, where optimal control of a fractional order
epidemic model with application to human respiratory syncytial virus infection
is proposed and studied \cite{Rosa}. In the case of HIV-AIDS infection,
Kheiri and Jafari propose and analyse a fractional optimal control
of an HIV/AIDS epidemic model with random testing and contact tracing \cite{Kheiri1}.
On the other hand, a fractional malaria transmission model is investigated
by Pinto and Tenreiro Machado \cite{Pinto} on the basis of optimal control techniques. 
Other works can be found in \cite{Boukhouima,Mouaouine,Boukhouima2019,AGARWAL2020124243}.

From all this biological and mathematical considerations, 
here we are interested to investigate the transmission process of HIV-AIDS infection, 
taking into account the memory effect that exists in most dynamical systems. 
The infection process is modelled by a general incidence rate which covers, 
under some hypothesis, the most functional response existing in the literature. 
Using Lyapunov functionals and the fractional invariance principal, 
we prove that the global dynamics of the model is determined by the basic reproduction number.
Furthermore, in order to minimize the spread of the disease into the population, 
a fractional optimal control is formulated and numerically solved based on Moroccan data.

We organized the paper as follows. 
In Section~\ref{sec:2}, some properties of the solutions 
are given and existence conditions of the equilibrium points
are discussed. The stability analysis of the equilibria is studied
in Section~\ref{sec:3}, while in Section~\ref{sec:optctrl} the fractional
optimal control of the model is investigated. An application of our
analysis to Morocco data is given in Section~\ref{sec:5}. We end with
Section~\ref{sec:conc} of conclusions.


\section{Well-possessedness of the model}
\label{sec:2}

In this section, we propose a fractional SICA epidemic model 
with general incidence rate (Section~\ref{subsec:PR}) 
and give some preliminary but fundamental results
that include the well-possessedness of the model
and existence conditions of the possible equilibria
(Section~\ref{subsec:PR}).


\subsection{Mathematical model}
\label{subsec:MM}

Taking into consideration the memory effect presented by Caputo fractional derivatives, 
we propose the following SICA epidemic model with general incidence rate:
\begin{equation}
\label{1}
\begin{cases}
_{0}^{C}D_{t}^{\alpha}S(t) = \Lambda  - \mu S(t)- f\left(S(t),I(t)\right)I(t),\\[0.2 cm]
_{0}^{C}D_{t}^{\alpha}I(t) = f\left(S(t),I(t)\right)I(t) - (\rho + \phi + \mu)I(t)
+ \sigma A(t)  + \omega C(t), \\[0.2 cm]
_{0}^{C}D_{t}^{\alpha}C(t) = \phi I(t) - (\omega + \mu)C(t),\\[0.2 cm]
_{0}^{C}D_{t}^{\alpha}A(t) =  \rho \, I(t) - (\sigma + \mu + d) A(t),
\end{cases}
\end{equation}
where $_{0}^{C}D_{t}^{\alpha}$ represents the Caputo fractional derivative
of order $ 0<\alpha\leq1$ defined for an arbitrary function $ \varphi $ \cite{Podlubny} by
$$
_{0}^{C}D_{t}^{\alpha}\varphi(t)= \dfrac{1}{\Gamma(1-\alpha)}\int^{t}_{0}\dfrac{\varphi'(x)}{(t-x)^{\alpha}}dx.
$$
Note that when $\alpha\rightarrow1 $ system (\ref{1}) becomes a classical system of ODEs.

\begin{remark}
The Caputo derivative is a good choice in order to include long-term memory effects. 
Indeed, the power-law function $ (t-x)^{-\alpha} $, that appears in its definition,
exhibits a slow decay and the state of the system at quite early times also contribute 
to the evolution of the system. This type of kernel guarantees the existence of scaling 
features as it is often intrinsic in natural phenomena. Hence, fractional derivatives, 
when introducing a convolution integral with a power-law memory kernel, are useful to describe 
memory effects in dynamical systems. The decaying rate of the memory kernel 
(a time-correlation function) depends on $ \alpha$. A lower value of $ \alpha $ 
corresponds to more slowly-decaying time-correlation functions (long memory). In some sense, 
the strength of the memory is controlled by $ \alpha $. As $ \alpha\rightarrow 1 $, the influence 
of memory decreases: the system tends toward a memoryless system. While modelling various memory phenomena, 
one observes that memory processes usually consist of two stages. One is short with permanent retention,
while the other is governed by a simple model of fractional derivative. It has been shown that fractional models
perfectly fits the test data of memory phenomena in different disciplines, 
for example in mechanics, but also in biology and psychology. 
The interested reader in these issues is referred to \cite{Du2013}.
\end{remark}

The variables $S$, $I$, $C$ and $A$ represent individuals, respectively, susceptible,
HIV infected with no clinical symptoms of AIDS, HIV infected under ART treatment,
with a viral load remaining low, and HIV infected with AIDS clinical symptoms.
The susceptible population is increased by the recruitment of individuals
at a rate $\Lambda$, while $\mu$ is the natural death rate of all individuals.
Susceptible individuals acquire HIV infection at a rate $f(S,I)$ by following
effective contact with those in the class $I$. HIV-infected individuals
with no AIDS symptoms $I$ progress to the class $C$ at a rate $\phi$
and, if they do not follow treatment, to the class $A$ at a rate $\rho$.
HIV-infected individuals with AIDS symptoms are treated for HIV
at rate $\sigma$. Individuals in the class $C$ that do not maintain
treatment, leave to the class $I$ at a rate $\omega$. We assume that
only HIV-infected individuals with AIDS symptoms $A$ suffer from
an AIDS induced death rate, denoted by $d$. As in \cite{Hattaf},
the general incidence function $f(S,I)$ is assumed to be continuously
differentiable in the interior of $\mathbb{R}^{2}_{+}$ and to satisfy
the following hypotheses:
\begin{gather}
\label{H1} \tag{$H_{1}$} 
f(0,I)=0,
\hspace*{0.2 cm} \text{ for all } I \geq 0,\\
\label{H2}\tag{$H_{2}$} \frac{ \partial f}{\partial S}(S,I)> 0,
\hspace*{0.2 cm} \text{ for all } S>0\ \text{ and } \ I \geq 0,\\
\label{H3}\tag{$H_{3}$} \frac{ \partial f}{\partial I}(S,I)
\leq 0 \hspace*{0.2 cm},\hspace*{0.2 cm} \text{ for all }\ S\geq 0\
\text{ and } \ I \geq 0.
\end{gather}
Biologically, the three hypotheses $H_{1}$, $H_{2}$ and $H_{3}$ are reasonable. 
Indeed, the first means that the incidence function is equal to zero 
if there are no susceptible individuals. The second one signifies that 
the incidence rate is increasing when the number of infected 
individuals are constant and the number of susceptible individuals increases. 
This means that the higher the number of susceptible individuals, 
the higher the average number of individuals infected over time. 
The last hypothesis means that the higher the number of infected individuals, 
the lower the average number of infected individuals over time.


\subsection{Preliminary results}
\label{subsec:PR}

Since model (\ref{1}) describes the evolution of population,
we need to prove that the solutions are non-negative and bounded
for all time. These properties imply the global existence of solutions.
For biological considerations, we assume that the initial conditions satisfy
\begin{equation}
\label{IC}
S(0)=S_0\geq0, \
I(0)=I_0\geq0, \
C(0)=C_0\geq0, \
A(0)=A_0\geq 0.
\end{equation}

\begin{thm}
For any initial conditions satisfying (\ref{IC}), system (\ref{1})
has a unique  solution on $\left[0,+\infty\right)$. Moreover, this
solution remains non-negative and bounded for all $ t\geq0$.
In addition, we have
$$
N(t)\leq N(0)+\dfrac{\Lambda}{\mu},
$$
where $N(t)=S(t)+I(t)+C(t)+A(t)$.
\end{thm}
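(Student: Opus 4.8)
The plan is to split the statement into four parts — local existence/uniqueness, non-negativity, boundedness (via the estimate on $N$), and then global existence as a consequence — and handle them in that order.

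First, for local existence and uniqueness, I would rewrite system~(\ref{1}) in the compact form $_{0}^{C}D_{t}^{\alpha}X(t) = F(X(t))$ with $X = (S,I,C,A)$ and observe that, since $f$ is continuously differentiable on the interior of $\mathbb{R}^{2}_{+}$, the vector field $F$ is locally Lipschitz there. Invoking a standard existence-uniqueness theorem for Caputo fractional differential equations (of the type found in Lin's or Diethelm's work, which the authors presumably cite), one gets a unique solution on a maximal interval $[0,T_{\max})$. I would note that one should be slightly careful about the boundary of the positive orthant because of hypothesis~(\ref{H1}); the cleanest route is to first establish non-negativity on the local interval, then argue one stays in a region where $F$ is nice.

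Second, for non-negativity: the key tool is that if a function $\varphi$ satisfies $\varphi(t_{1}) = 0$ and $\varphi(t) \geq 0$ for $t \in [0,t_{1}]$, then $_{0}^{C}D_{t}^{\alpha}\varphi(t_{1}) \geq 0$ (a standard lemma for Caputo derivatives). Applying this coordinate by coordinate at a hypothetical first exit time from the orthant: on the $S$-axis, $_{0}^{C}D_{t}^{\alpha}S = \Lambda - f(0,I)I = \Lambda > 0$ using (\ref{H1}); on the $I$-axis, $_{0}^{C}D_{t}^{\alpha}I = \sigma A + \omega C \geq 0$; on the $C$-axis, $_{0}^{C}D_{t}^{\alpha}C = \phi I \geq 0$; on the $A$-axis, $_{0}^{C}D_{t}^{\alpha}A = \rho I \geq 0$. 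Each contradicts the vector field pointing outward, so the solution stays in $\mathbb{R}^{4}_{+}$.

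Third, for boundedness, I would add the four equations of~(\ref{1}) to obtain
$$
_{0}^{C}D_{t}^{\alpha}N(t) = \Lambda - \mu N(t) - d\,A(t) \leq \Lambda - \mu N(t),
$$
using non-negativity of $A$. Then comparison for fractional differential equations (or equivalently solving the linear fractional ODE $_{0}^{C}D_{t}^{\alpha}N = \Lambda - \mu N$ via Mittag-Leffler functions and using $0 \leq E_{\alpha}(-\mu t^{\alpha}) \leq 1$) yields
$$
N(t) \leq N(0)E_{\alpha}(-\mu t^{\alpha}) + \frac{\Lambda}{\mu}\bigl(1 - E_{\alpha}(-\mu t^{\alpha})\bigr) \leq N(0) + \frac{\Lambda}{\mu},
$$
which is exactly the claimed bound. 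Since each component is non-negative and bounded above by $N(t)$, the whole solution is bounded. Finally, boundedness on $[0,T_{\max})$ precludes blow-up, so the standard continuation argument forces $T_{\max} = +\infty$, giving the solution on $[0,+\infty)$.

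The main obstacle I anticipate is not any single estimate but marshalling the right fractional-calculus lemmas with correct hypotheses: specifically the sign lemma for the Caputo derivative at a first zero, and a fractional comparison principle. These are routine in the literature but must be cited precisely, and the non-negativity argument must be interleaved with the continuation argument rather than stated afterward, since the local existence theorem itself may need the solution to stay in the region where $f$ is defined and smooth.
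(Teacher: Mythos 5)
Your proposal follows essentially the same route as the paper: local existence from Lin's theorem applied to the compact form $_{0}^{C}D_{t}^{\alpha}X=F(X)$, non-negativity by contradiction at the first exit time using exactly the same four component-wise evaluations ($\Lambda>0$, $\sigma A+\omega C\geq0$, $\phi I\geq0$, $\rho I\geq0$), boundedness by summing the equations and using $0\leq E_{\alpha}(-\mu t^{\alpha})\leq 1$, and global existence as a consequence of boundedness. One small caveat: your auxiliary sign lemma is stated with the inequality reversed (at a minimum over $[0,t_{1}]$ attained at $t_{1}$ the Caputo derivative satisfies $_{0}^{C}D_{t}^{\alpha}\varphi(t_{1})\leq 0$, not $\geq 0$); the paper instead invokes the generalized mean value theorem of Odibat--Shawagfeh to deduce monotonicity of the vanishing component near $t^{*}$ and reach the contradiction.
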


\begin{proof}
First, system (\ref{1}) can be written as follows:
\begin{equation}
\label{3}
_{0}^{C}D_{t}^{\alpha}X(t)= F(X) ,
\end{equation}
where
\begin{center}
$X(t)=\begin{pmatrix}
S(t) \\
I(t) \\
C(t)\\
A(t)
\end{pmatrix}$
and $F(X)= \begin{pmatrix}
\Lambda  - \mu S(t)- f\left(S(t),I(t)\right)I(t)\\
f\left(S(t),I(t)\right)I(t) - (\rho + \phi + \mu)I(t)
+ \sigma A(t)  + \omega C(t) \\
\phi I(t) - (\omega + \mu)C(t)\\
\rho \, I(t) - (\sigma + \mu + d) A(t)
\end{pmatrix}.$
\end{center}
Clearly, function $F$ satisfies the conditions given in \cite{Lin}.
Then, there exists a unique local solution
of the initial value problem \eqref{3}.
Now, we show that the non-negative orthant
$\mathbb{R}^{4}_{+}=\lbrace X\in \mathbb{R}^{4} : X \geq 0 \rbrace$ is a
positively invariant set. We denote by $t^{*}$ the first time 
at which at least one of the variables is equal to zero:
$$
t^{*}=\min\lbrace t>0:S(t)I(t)C(t)A(t)=0\rbrace.
$$
We discuss four cases. (i) If $ S(t^{*})=0$, then it follows that
$I(t)\geq 0, C(t)\geq 0$ and $A(t)\geq 0$ when $t \in [0,t^{*}]$.
From the first equation of system (\ref{1}), we have
$$
_{0}^{C}D_{t}^{\alpha}S(t)|_{t=t^{*}} = \Lambda>0.
$$
By the generalized mean value theorem \cite{Odibat},
$S(t)$ is a non-decreasing function for $t\in (t^{*}-\epsilon,t^{*}]$,
where $\epsilon$ is sufficiently small. So, $S(t)<0$ for
$t\in (t^{*}-\epsilon,t^{*}]$, which is a contradiction with $ S(t)>0 $
when $t\in (0,t^{*})$.
(ii) Let $ I(t^{*})=0$. In this case, we have $ S(t)\geq 0, C(t)\geq 0 $
and $ A(t)\geq 0 $ when $ t \in [0,t^{*}] $. From the second equation
of system (\ref{1}), we have
$$
_{0}^{C}D_{t}^{\alpha}I(t)|_{t=t^{*}} = \sigma A(t)  + \omega C(t)\geq0.
$$
Hence, function $ I(t) $ is non-decreasing for $ t\in (t^{*}-\epsilon,t^{*}] $,
where $  \epsilon $ is sufficiently small. Thus, $ I(t)\leq0 $ for
$ t\in (t^{*}-\epsilon,t^{*}] $. This is in contradiction
with $ I(t)>0 $ when $ t\in (0,t^{*})$.
(iii) Let $ C(t^{*})=0$. Then, $ S(t)\geq 0, I(t)\geq 0 $ and
$ A(t)\geq 0 $ when $ t \in [0,t^{*}] $. From the third equation
of system \eqref{1}, we have
$$
_{0}^{C}D_{t}^{\alpha}C(t)|_{t=t^{*}} = \phi I(t) \geq0.
$$
Therefore, function $ C(t) $ is non-decreasing for $ t\in (t^{*}-\epsilon,t^{*}] $,
where $  \epsilon $ is sufficiently small. So $ C(t)\leq0 $ for
$ t\in (t^{*}-\epsilon,t^{*}] $. That is a contradiction
with $ C(t)>0 $ when $ t\in (0,t^{*}) $.
(iv) Let $ A(t^{*})=0$. Hence, $ S(t)\geq 0, I(t)\geq 0 $ and
$ C(t)\geq 0 $ when $ t \in [0,t^{*}] $. From the last equation
of system \eqref{1}, we have
$$
_{0}^{C}D_{t}^{\alpha}A(t)|_{t=t^{*}} = \rho I(t) \geq0.
$$
As a result, function $ A(t) $ is non-decreasing for $ t\in (t^{*}-\epsilon,t^{*}] $,
where $  \epsilon $ is sufficiently small. So $ A(t)\leq0 $ for $ t\in (t^{*}-\epsilon,t^{*}] $,
which is in contradiction with $ A(t)>0 $ when $ t\in (0,t^{*}) $.

Next, we prove the boundedness of solutions. By adding together
all the equations of system \eqref{1}, one has that
\begin{align*}
_{0}^{C}D_{t}^{\alpha}N(t)\leq \Lambda - \mu N(t).
\end{align*}
Hence,
\begin{align*}
N(t)\leq N(0) E_{\alpha}(-\mu t^{\sigma})+\dfrac{\Lambda}{\mu}[1-E_{\alpha}(-\mu t^{\alpha})].
\end{align*}
Since $0\leq E_{\alpha}(-\mu t^{\alpha})\leq 1 $, we obtain
$$
N(t)\leq N(0)+\dfrac{\Lambda}{\mu}.
$$
Consequently, the solutions of system \eqref{1} are bounded for $ t\geq0 $.
Finally, the existence and uniqueness of solution for the initial
value problem \eqref{3} in $ [0,+\infty) $ is deduced from
\cite[Theorem 3.1 and Remark 3.2]{Lin}.
\hfil \qed
\end{proof}

Now, we investigate the existence of equilibria of \eqref{1}.
It is easy to see that system \eqref{1}
has a disease-free equilibrium of the form
\begin{equation}
\label{eq:Ef}
E_{f}=\left(\dfrac{\Lambda}{\mu},0,0,0 \right).
\end{equation}
Therefore, the basic reproduction number $ R_{0} $
of system \eqref{1} is given by
$$
R_{0}=\dfrac{f\left( \dfrac{\Lambda}{\mu},0\right) \xi_{2}\xi_{3}}{\mathcal{D}},
$$
where
\begin{align*}
\xi_{2}&=\omega+\mu,\\
\xi_{3}&= \sigma + \mu +d,\\
\mathcal{D}&=\mu[\xi_{2}(\xi_{3}+\rho)+\phi\xi_{3}+\rho d]+\rho\omega d.
\end{align*}
Biologically, this number represents the average of new infected individuals
produced by a single HIV-infected/AIDS individual
on contact in a completely susceptible population.

The other equilibria satisfy the following system:
\begin{align}
\label{4a}
\begin{cases}
\Lambda  - \mu S(t)- f\left(S(t),I(t)\right)I(t)=0,\\
f\left(S(t),I(t)\right)I(t) - \xi_{1}I(t)
+ \sigma A(t)  + \omega C(t)=0,\\
\phi I(t) - \xi_{2}C(t)=0,\\
\rho \, I(t) - \xi_{3} A(t)=0,
\end{cases}
\end{align}
where $\xi_{1}=\rho + \phi + \mu$.

Since FDEs have the same equilibrium points as ODEs counterparts,
then we have the following direct result from \cite{lotfi}.

\begin{thm}
\begin{description}
\item[(i)]If $ R_{0}\leq 1 $, then system \eqref{1} has
a unique disease-free equilibrium of form \eqref{eq:Ef}.
\item[(ii)] If $ R_{0}>1 $, then the disease-free equilibrium
is still present and system \eqref{1} has a unique endemic equilibrium
of form $ E^{*} = (S^{*},I^{*},C^{*},A^{*}) $ with
$ S^{*}\in  \left(0,\dfrac{\Lambda}{\mu} \right) $, $ I^{*}>0 $,
$ C^{*}>0 $, and $A^{*}>0$.
\end{description}
\end{thm}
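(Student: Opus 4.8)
The plan is to observe (as the text notes) that equilibria of the fractional system coincide with those of its ODE counterpart, so it suffices to solve the algebraic system \eqref{4a}, and then to reduce it to a single scalar equation in $I$. First I would use the third and fourth equations of \eqref{4a} to write $C=\phi I/\xi_{2}$ and $A=\rho I/\xi_{3}$. If $I=0$, this forces $C=A=0$ and, by the first equation, $S=\Lambda/\mu$, so the only equilibrium with $I=0$ is $E_{f}$ of the form \eqref{eq:Ef}; it remains to hunt for equilibria with $I>0$.

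Substituting the expressions for $C$ and $A$ into the second equation of \eqref{4a} and dividing by $I>0$, I would obtain $f(S,I)=\xi_{1}-\sigma\rho/\xi_{3}-\omega\phi/\xi_{2}$. A short computation, using $\xi_{2}-\omega=\mu$ and $\xi_{3}-\sigma=\mu+d$, shows that the right-hand side equals the positive constant $k:=\mathcal{D}/(\xi_{2}\xi_{3})$. From the first equation of \eqref{4a} one then gets $S=(\Lambda-kI)/\mu$, which is positive exactly when $0<I<\Lambda/k$, and in that range automatically satisfies $S\in(0,\Lambda/\mu)$. Hence an endemic equilibrium corresponds precisely to a root in $(0,\Lambda/k)$ of
$$
g(I):=f\!\left(\frac{\Lambda-kI}{\mu},\,I\right)-k .
$$

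Next I would study $g$ on $[0,\Lambda/k]$. At the left endpoint, $g(0)=f(\Lambda/\mu,0)-k=k\,(R_{0}-1)$ by the definition of $R_{0}$; at the right endpoint $S=0$, so hypothesis \eqref{H1} gives $g(\Lambda/k)=f(0,\Lambda/k)-k=-k<0$. Differentiating, $g'(I)=-\tfrac{k}{\mu}\,\tfrac{\partial f}{\partial S}+\tfrac{\partial f}{\partial I}$, which is strictly negative on $(0,\Lambda/k)$ because there $S>0$, so $\tfrac{\partial f}{\partial S}>0$ by \eqref{H2}, while $\tfrac{\partial f}{\partial I}\le 0$ by \eqref{H3}. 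Thus $g$ is continuous and strictly decreasing on $[0,\Lambda/k]$. If $R_{0}\le 1$, then $g(0)\le 0$, hence $g(I)<g(0)\le 0$ for every $I\in(0,\Lambda/k]$, so there is no endemic equilibrium and $E_{f}$ is the unique equilibrium, proving (i). If $R_{0}>1$, then $g(0)>0>g(\Lambda/k)$, so by the intermediate value theorem together with strict monotonicity there is a unique $I^{*}\in(0,\Lambda/k)$ with $g(I^{*})=0$; setting $S^{*}=(\Lambda-kI^{*})/\mu\in(0,\Lambda/\mu)$, $C^{*}=\phi I^{*}/\xi_{2}>0$ and $A^{*}=\rho I^{*}/\xi_{3}>0$ yields the unique endemic equilibrium $E^{*}$, proving (ii).

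I expect the only delicate points to be bookkeeping: verifying that the threshold constant emerging from the second equation is exactly $\mathcal{D}/(\xi_{2}\xi_{3})$, so that the sign of $g(0)$ is controlled by $R_{0}-1$, and ensuring $f$ is at least continuous up to the boundary segment $S=0$ so that \eqref{H1} legitimately produces the sign change of $g$ at $I=\Lambda/k$. Once these are in place, the strict monotonicity of $g$ furnished by \eqref{H2}--\eqref{H3} makes both existence and uniqueness immediate.
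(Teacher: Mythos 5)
Your proof is correct, and all the delicate points you flag do check out: in particular, the threshold constant from the second equilibrium equation is
$\xi_{1}-\sigma\rho/\xi_{3}-\omega\phi/\xi_{2}
=\mu+\rho(\mu+d)/\xi_{3}+\phi\mu/\xi_{2}
=\mathcal{D}/(\xi_{2}\xi_{3})$,
so $g(0)=k(R_{0}-1)$ exactly as you claim, and hypotheses \eqref{H1}--\eqref{H3} give the sign change and strict monotonicity of $g$ on $[0,\Lambda/k]$. Note, however, that the paper does not actually prove this theorem: it only remarks that fractional systems share their equilibria with the corresponding ODE system and cites \cite{lotfi} for the rest. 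Your argument supplies the missing content, and it is the standard one for this class of general-incidence models (reduce to a scalar equation in $I$ and exploit monotonicity), so it is fully compatible with what the paper delegates to the reference. Two small implicit assumptions are worth recording: the conclusions $C^{*}>0$ and $A^{*}>0$ require $\phi>0$ and $\rho>0$ (true for the model parameters, but not stated as a hypothesis), and, as you yourself observe, one needs $f$ continuous up to the boundary segment $S=0$ for the evaluation $g(\Lambda/k)=-k$; alternatively one can avoid the endpoint by noting that strict decrease plus $g(0)\le 0$ already rules out interior roots in case (i), so the boundary value is only needed for existence in case (ii).
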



\section{Global stability}
\label{sec:3}

The aim of this section is to establish the global stability of equilibria
of \eqref{1} by using the fractional La-Salle's invariance principle and
an important lemma presented in \cite{De-Leon,Huo}. Firstly, we have
the following global stability result for the infection-free equilibrium $E_f$.

\begin{thm}
The disease-free equilibrium $E_f$ is globally
asymptotically stable if $R_0\leq1$.
\end{thm}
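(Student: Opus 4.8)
The plan is to follow the route announced at the start of this section: exhibit a Lyapunov functional whose Caputo derivative along trajectories of \eqref{1} is non-positive, and then invoke the fractional LaSalle invariance principle (together with the comparison lemma of \cite{De-Leon,Huo}) to identify the limit set as $\{E_{f}\}$. First I would reduce to a region where $S$ is controlled: since $_{0}^{C}D_{t}^{\alpha}S\leq\Lambda-\mu S$, the Mittag-Leffler comparison used above shows that $\Omega=\{(S,I,C,A)\in\mathbb{R}^{4}_{+}:S\leq\Lambda/\mu\}$ is positively invariant and attracts every non-negative solution, so it suffices to argue on $\Omega$. On $\Omega$, hypothesis \eqref{H3} followed by \eqref{H2} yields $f(S,I)\leq f(S,0)\leq f(\Lambda/\mu,0)$, and this is the only place where the structural assumptions on $f$ are used.

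Next, I would take the linear functional $V=I+\tfrac{\omega}{\xi_{2}}C+\tfrac{\sigma}{\xi_{3}}A\geq 0$. By linearity of $_{0}^{C}D_{t}^{\alpha}$ and the equations of \eqref{1}, the contributions of $\sigma A$ and $\omega C$ cancel and one is left with
$$
_{0}^{C}D_{t}^{\alpha}V=\Bigl(f(S,I)-\xi_{1}+\dfrac{\omega\phi}{\xi_{2}}+\dfrac{\sigma\rho}{\xi_{3}}\Bigr)I .
$$
Using $\xi_{2}-\omega=\mu$ and $\xi_{3}-\sigma=\mu+d$ one checks the identity $\xi_{1}\xi_{2}\xi_{3}-\omega\phi\xi_{3}-\sigma\rho\xi_{2}=\mathcal{D}$, i.e. $\xi_{1}-\tfrac{\omega\phi}{\xi_{2}}-\tfrac{\sigma\rho}{\xi_{3}}=\mathcal{D}/(\xi_{2}\xi_{3})$; since $f(\Lambda/\mu,0)=R_{0}\,\mathcal{D}/(\xi_{2}\xi_{3})$, this becomes
$$
_{0}^{C}D_{t}^{\alpha}V=\bigl[f(S,I)-f(\Lambda/\mu,0)\bigr]\,I+\dfrac{\mathcal{D}(R_{0}-1)}{\xi_{2}\xi_{3}}\,I .
$$
On $\Omega$ the first bracket is $\leq 0$, and $R_{0}\leq 1$ makes the second summand $\leq 0$, so $_{0}^{C}D_{t}^{\alpha}V\leq 0$ on $\Omega$.

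Then, by the fractional LaSalle invariance principle, every solution in $\Omega$ tends to the largest invariant set $M$ contained in $\bigl\{\,{}_{0}^{C}D_{t}^{\alpha}V=0\,\bigr\}$. On $M$, the equality $_{0}^{C}D_{t}^{\alpha}V=0$ forces $I=0$ when $R_{0}<1$; when $R_{0}=1$ it forces $I=0$ or $f(S,I)=f(\Lambda/\mu,0)$, and the latter, by $f(S,I)\leq f(S,0)$ and the strict monotonicity \eqref{H2}, forces $S=\Lambda/\mu$, so that the first equation of \eqref{1} reads $_{0}^{C}D_{t}^{\alpha}S=-f(\Lambda/\mu,I)\,I=0$, whence $I=0$ again because $f(\Lambda/\mu,I)>0$ by \eqref{H1} and \eqref{H2}. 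Thus $I\equiv 0$ on $M$; feeding this into the equations for $C$, $A$ and $S$, invariance forces $C\equiv A\equiv 0$ and $S\equiv\Lambda/\mu$, hence $M=\{E_{f}\}$. So $E_{f}$ attracts all non-negative solutions, and together with local asymptotic stability of $E_{f}$ (obtained, for $R_{0}<1$, from the Matignon criterion for the Jacobian of \eqref{1} at $E_{f}$, and, for $R_{0}=1$, from the sign of $_{0}^{C}D_{t}^{\alpha}V$ near $E_{f}$) this gives the stated global asymptotic stability.

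The parts I expect to need the most care are: the borderline case $R_{0}=1$, where $_{0}^{C}D_{t}^{\alpha}V$ may vanish without $I$ vanishing, so that invariance has to be traced through all four equations of \eqref{1} to still obtain $M=\{E_{f}\}$; and the rigorous use of the fractional version of LaSalle's principle, which does not transcribe verbatim from the classical ODE setting and is exactly where the auxiliary lemma of \cite{De-Leon,Huo} is needed. The reduction to $\Omega$ in the first step, which is what lets \eqref{H2}--\eqref{H3} replace $f(S,I)$ by the constant $f(\Lambda/\mu,0)$, also deserves care since $E_{f}$ sits on the boundary of $\Omega$.
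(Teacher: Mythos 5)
Your route is genuinely different from the paper's. The paper keeps the susceptible compartment inside the Lyapunov functional, taking $V_{1}=S-S_{0}-\int_{S_{0}}^{S}\frac{f(S_{0},0)}{f(X,0)}\,dX+I+\frac{\omega}{\xi_2}C+\frac{\sigma}{\xi_3}A$, and the bulk of its proof is spent establishing the fractional chain-rule-type estimate \eqref{5} for the Caputo derivative of the integral term (integration by parts plus a H\^opital limit). The payoff is that $_{0}^{C}D_{t}^{\alpha}V_{1}\leq \mu\bigl(1-\frac{f(S_0,0)}{f(S,0)}\bigr)(S_0-S)+\frac{\mathcal{D}}{\xi_2\xi_3}I(R_0-1)\leq 0$ holds on all of $\mathbb{R}^{4}_{+}$ with no restriction on $S$, and $V_1$ is positive definite at $E_f$, so the cited fractional LaSalle machinery applies directly. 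You discard the $S$-part, which makes the derivative computation elementary (your identity $\xi_1\xi_2\xi_3-\omega\phi\xi_3-\sigma\rho\xi_2=\mathcal{D}$ is correct, and your identification of the largest invariant set is more careful than the paper's one-line assertion), but you pay for it in two places.

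First, the reduction to $\Omega=\{S\leq\Lambda/\mu\}$ is not as innocent as stated. The comparison argument gives positive invariance of $\Omega$ and $\limsup_{t\to\infty}S(t)\leq\Lambda/\mu$, but a solution with $S(0)>\Lambda/\mu$ need never enter $\Omega$ in finite time, so ``it suffices to argue on $\Omega$'' does not follow; and the natural patch (work on $\{S\leq\Lambda/\mu+\epsilon\}$, where $f(S,I)\leq f(\Lambda/\mu+\epsilon,0)$) destroys the sign of your second summand precisely in the borderline case $R_0=1$. In the ODE setting one would pass to the omega-limit set, but Caputo dynamics lack the semigroup property, so limit sets do not inherit invariance in the usual way --- this is exactly the difficulty the paper's choice of $V_1$ is engineered to avoid. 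Second, your $V$ vanishes on the whole face $\{I=C=A=0\}$, so it is only positive semi-definite at $E_f$: LaSalle then yields attraction but not Lyapunov stability, and the supplementary stability argument you sketch (Matignon's criterion for $R_0<1$, ``the sign of the derivative near $E_f$'' for $R_0=1$) is not carried out --- for $R_0<1$ it requires verifying the fractional Routh--Hurwitz conditions for the cubic at $E_f$, and for $R_0=1$ the sign of $_{0}^{C}D_{t}^{\alpha}V$ says nothing about the $S$-direction, which $V$ does not see. Both gaps are repairable, but as written your argument establishes convergence of solutions starting in $\Omega$ rather than global asymptotic stability.
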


\begin{proof}
For the global stability of $E_f$, we construct the following Lyapunov functional:
\begin{equation*}
V_{1}(S,I,C,A)=S-S_{0}-\int_{S_{0}}^{S}\frac{f(S_{0},0)}{f(X,0)}dX
+I+\dfrac{\omega}{\xi_2}C+\dfrac{\sigma}{\xi_3}A,
\end{equation*}
where $S_0=\dfrac{\Lambda}{\mu}$. Obviously, functional $V_{1}$
is non-negative. Computing the fractional time derivative
of $V_{1}$ along the solution of \eqref{1}, we get
\begin{equation*}
_{0}^{C}D_{t}^{\alpha}V_{1}
=_{0}^{C}D_{t}^{\alpha}\left( S-S_{0}
-\int_{S_{0}}^{S}\frac{f(S_{0},0)}{f(X,0)}dX\right)
+ \:_{0}^{C}D_{t}^{\alpha}I+\dfrac{\omega}{\xi_2}\:_{0}^{C}D_{t}^{\alpha}C
+\dfrac{\sigma}{\xi_3}\:_{0}^{C}D_{t}^{\alpha}A.
\end{equation*}
We start by proving that
\begin{align}
\label{5}
_{0}^{C}D_{t}^{\alpha}\left( S-S_{0}
-\int_{S_{0}}^{S}\frac{f(S_{0},0)}{f(X,0)}dX\right)
\leq \left(1-\dfrac{f(S_{0},0)}{f(S,0)}\right)\: _{0}^{C}D_{t}^{\alpha}S.
\end{align}
Inequality \eqref{5} can be reformulated as follows:
 \begin{equation}
 \label{10}
_{t_{0}}^{C}D_{t}^{\alpha}S(t)-f(S,0)_{t_{0}}^{C}D_{t}^{\alpha}\left[
\int_{S_{0}}^{S}\frac{1}{f(X,0)}dX\right]\leq 0.
\end{equation}
Using the definition of the Caputo fractional derivative, we have
\begin{equation*}
_{t_{0}}^{C}D_{t}^{\alpha}S(t)
=\dfrac{1}{\Gamma(1-\alpha)}
\int^{t}_{t_{0}}\dfrac{S'(y)}{(t-y)^{\alpha}} dy
\end{equation*}
and
\begin{equation*}
_{t_{0}}^{C}D_{t}^{\alpha}\left[
\int_{S_{0}}^{S}\frac{1}{f(X,0)}dX\right]
=\dfrac{1}{\Gamma(1-\alpha)}\int^{t}_{t_{0}}
\dfrac{S'(y)}{(t-y)^{\alpha}f(S(y),0)} dy.
\end{equation*}
Consequently, the inequality (\ref{10}) can be written as
\begin{equation}
\label{11}
\dfrac{1}{\Gamma(1-\alpha)}\int^{t}_{t_{0}}
\dfrac{S'(y)}{(t-y)^{\alpha}}\left(
1-\dfrac{f(S(t),0)}{f(S(y),0)}\right) dy\leq 0.
\end{equation}
Now, we show that inequality (\ref{11}) holds. Denoting
$$
\Psi(t)=\dfrac{1}{\Gamma(1-\alpha)}
\int^{t}_{t_{0}}\dfrac{S'(y)}{(t-y)^{\alpha}}\left(
1-\dfrac{f(S(t),0)}{f(S(y),0)}\right) dy,
$$
we integrate by parts by defining
\begin{equation*}
v(y)=\dfrac{(t-y)^{-\alpha}}{\Gamma(1-\alpha)},
\quad \:v'(y)=\dfrac{\alpha(t-y)^{-(\alpha+1)}}{\Gamma(1-\alpha)}
\end{equation*}
and
\begin{equation*}
w'(y)=S'(y)\left( 1-\dfrac{f(S(t),0)}{f(S(y),0)}\right),
\quad \:w(y)=S(y)-S(t)-\int^{S(y)}_{S(t)}\dfrac{f(S(t),0)}{f(X,0)}dX,
\end{equation*}
to obtain
\begin{equation}
\label{12}
\begin{split}
\Psi(t)
&= \left[ \dfrac{(t-y)^{-\alpha}}{\Gamma(1-\alpha)}\left(S(y)-S(t)
-\int^{S(y)}_{S(t)}\dfrac{f(S(t),0)}{f(X,0)}dX\right) \right] ^{y=t}\\
&\quad -\dfrac{(t-t_{0})^{-\alpha}}{\Gamma(1-\alpha)}\left(S(t_{0})-S(t)
-\int^{S(t_{0})}_{S(t)}\dfrac{f(S(t),0)}{f(X,0)}dX\right)\\
&\quad -\int^{t}_{t_{0}}\dfrac{\alpha(t-y)^{-(\alpha+1)}}{\Gamma(1-\alpha)}\left(
S(y)-S(t) -\int^{S(y)}_{S(t)}\dfrac{f(S(t),0)}{f(X,0)}dX\right) dy.
\end{split}
\end{equation}
We can easily see that the first term in (\ref{12}) is undefined $ (\frac{0}{0}) $.
We analyse the corresponding limit. By H\^opital's rule, we get
\begin{equation*}
\lim_{y\to t}\dfrac{(t-y)^{-\alpha}}{\Gamma(1-\alpha)}\left(S(y)
-S(t) -\int^{S(y)}_{S(t)}\dfrac{f(S(t),0)}{f(X,0)}dX\right)
=\lim_{y\to t}\dfrac{S'(y)\left( 1-\dfrac{f(S(t),0)}{
f(S(y),0)}\right)}{-\alpha\Gamma(1-\alpha)(t-y)^{\alpha-1}}=0.
\end{equation*}
Hence,
\begin{equation}
\begin{split}
\Psi(t)
&=-\dfrac{(t-t_{0})^{-\alpha}}{\Gamma(1-\alpha)}\left(S(t_{0})-S(t)
-\int^{S(t_{0})}_{S(t)}\dfrac{f(S(t),0)}{f(X,0)}dX\right)\\
&\qquad -\int^{t}_{t_{0}}\dfrac{\alpha(t-y)^{-(\alpha+1)}}{\Gamma(1-\alpha)}\left(
S(y)-S(t) -\int^{S(y)}_{S(t)}\dfrac{f(S(t),0)}{f(X,0)}dX\right) dy.
\end{split}
\end{equation}
Then,
\begin{equation}
\Psi(t)=\dfrac{1}{\Gamma(1-\alpha)}
\int^{t}_{t_{0}}\dfrac{S'(y)}{(t-y)^{\alpha}}\left(
1-\dfrac{f(S(t),0)}{f(S(y),0)}\right) dy\leq 0.
\end{equation}
As a result, the inequality (\ref{11}) is satisfied. Consequently,
\begin{align*}
_{0}^{C}D_{t}^{\alpha}V_{1}(t)
&\leq \left(1-\dfrac{f(S_{0},0)}{f(S,0)}\right)\:_{0}^{C}D_{t}^{\alpha}
S+\:_{0}^{C}D_{t}^{\alpha}I+\dfrac{\omega}{\xi_2}\:
_{0}^{C}D_{t}^{\alpha}C+\dfrac{\sigma}{\xi_3}\:_{0}^{C}D_{t}^{\alpha}A\\
&\leq \mu \left(1-\dfrac{f(S_{0},0)}{f(S,0)}\right)(S_0-S)
+I\left[\dfrac{f(S,I)}{f(S,0)}f(S_0,0)-\left(\xi_1
-\dfrac{\omega \phi}{\xi_2}-\dfrac{\sigma \rho}{\xi_3}\right)\right]\\
&\leq \mu \left(1-\dfrac{f(S_{0},0)}{f(S,0)}\right)(S_0-S)
+\dfrac{\mathcal{D}}{\xi_2 \xi_3}I\left(\dfrac{f(S,I)}{f(S,0)}R_0-1\right)\\
&\leq \mu \left(1-\dfrac{f(S_{0},0)}{f(S,0)}\right)(S_0-S)
+\dfrac{\mathcal{D}}{\xi_2 \xi_3}I\left(R_0-1\right).
\end{align*}
Since $f$ is an increasing function with respect to $S$, we have
\begin{align*}
1-\dfrac{f(S_{0},0)}{f(S,0)}
&\geq 0 \quad \text{for}\quad S\geq S_0,\\
1-\dfrac{f(S_{0},0)}{f(S,0)}
&< 0 \quad \text{for}\quad S<S_0.
\end{align*}
We finally get 
\begin{equation*}
\left(1-\dfrac{f(S_{0},0)}{f(S,0)}\right)(S_0-S)\leq 0.
\end{equation*}
Under the assumption $ R_{0}\leq 1 $, it follows that
$ _{0}^{C}D_{t}^{\alpha}V_{1}\leq0 $. Moreover,
the largest compact invariant set in
$\lbrace (S,I,C,A)\in \mathbb{R}^{4}:\, _{0}^{C}D_{t}^{\alpha}V_{1}\leq0 \rbrace$
is the singleton $ E_{f} $. Accordingly, by LaSalle invariance principle,
the infection-free equilibrium $E_{f}$ is
globally asymptotically stable when $ R_{0}\leq 1 $.
\hfil \qed
\end{proof}

Now, we focus on the stability of the endemic equilibrium $E^*$.
For that, we assume that the function $f$ satisfies the following condition:
\begin{gather}
\label{H4}\tag{$H_{4}$}
\bigg(1-\dfrac{f(S,I)}{f(S,I^*)}\bigg)\bigg(\dfrac{f(S,I^*)}{f(S,I)}
-\dfrac{I}{I^*}\bigg)\leq0, \ \text{ for all } \ S,I>0.
\end{gather}

\begin{thm}
\begin{description}
\item[(i)]If $R_0>1$, then $E_f$ becomes unstable.
\item[(ii)]If $R_0>1$ and \eqref{H4} holds, then
the endemic equilibrium $E^*$ is globally asymptotically stable.
\end{description}
\end{thm}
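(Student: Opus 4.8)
\medskip
\noindent\textit{Outline of a proof.}\quad
For part (i) the plan is to linearise \eqref{1} at $E_f=(S_{0},0,0,0)$, $S_{0}=\Lambda/\mu$, and to invoke the standard stability criterion for linear fractional systems (an equilibrium is unstable as soon as the Jacobian has an eigenvalue $\lambda$ with $|\arg\lambda|<\alpha\pi/2$). Since $I=0$ at $E_f$, every product carrying the factor $I$ in the partial derivatives vanishes, so the Jacobian is block--triangular,
\begin{equation*}
J(E_f)=\begin{pmatrix}
-\mu & -f(S_{0},0) & 0 & 0\\
0 & f(S_{0},0)-\xi_{1} & \omega & \sigma\\
0 & \phi & -\xi_{2} & 0\\
0 & \rho & 0 & -\xi_{3}
\end{pmatrix},
\end{equation*}
so that $-\mu<0$ is one eigenvalue and the remaining three are the eigenvalues of the lower--right $3\times3$ block $M$. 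A short computation yields $\det M=f(S_{0},0)\,\xi_{2}\xi_{3}-\left(\xi_{1}\xi_{2}\xi_{3}-\omega\phi\xi_{3}-\sigma\rho\xi_{2}\right)$, and since $\xi_{1}\xi_{2}\xi_{3}-\omega\phi\xi_{3}-\sigma\rho\xi_{2}=\mathcal{D}$, this equals $\mathcal{D}\,(R_{0}-1)$. Hence, when $R_{0}>1$, the (monic cubic) characteristic polynomial $p(\lambda)=\det(\lambda I-M)$ satisfies $p(0)=-\det M=-\mathcal{D}(R_{0}-1)<0$ while $p(\lambda)\to+\infty$ as $\lambda\to+\infty$, so $M$ has a positive real eigenvalue $\lambda^{*}$; as $\arg\lambda^{*}=0<\alpha\pi/2$ for all $\alpha\in(0,1]$, $E_f$ is unstable. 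Note that $(H_{4})$ is not used here.

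For part (ii) I would mirror the Lyapunov--LaSalle argument already carried out for $E_f$, but now centred at $E^{*}$ and equipped with Volterra (logarithmic) terms. Writing $\Phi(x)=x-1-\ln x\ge0$, the candidate functional is
\begin{equation*}
V_{2}=\int_{S^{*}}^{S}\left(1-\frac{f(S^{*},I^{*})}{f(X,I^{*})}\right)dX
+I^{*}\Phi\!\left(\frac{I}{I^{*}}\right)
+\frac{\omega}{\xi_{2}}\,C^{*}\Phi\!\left(\frac{C}{C^{*}}\right)
+\frac{\sigma}{\xi_{3}}\,A^{*}\Phi\!\left(\frac{A}{A^{*}}\right).
\end{equation*}
By $(H_{2})$ the integrand of the $S$--term has the sign of $X-S^{*}$, exactly as for $V_{1}$, and $\Phi\ge0$, so $V_{2}\ge0$. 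The next step is to estimate ${}_{0}^{C}D_{t}^{\alpha}V_{2}$ from above: for the three $\Phi$--terms one uses the fractional composition inequality of \cite{De-Leon,Huo}, namely ${}_{0}^{C}D_{t}^{\alpha}\big(u-u^{*}-u^{*}\ln(u/u^{*})\big)\le(1-u^{*}/u)\,{}_{0}^{C}D_{t}^{\alpha}u$, and for the $S$--term the inequality \eqref{5} with $f(\cdot,0)$ replaced by $f(\cdot,I^{*})$ --- its proof used only monotonicity of $f$ in $S$, hence transfers verbatim. Substituting \eqref{1}, removing $\Lambda$ and $\xi_{1}$ via the endemic identities $\Lambda=\mu S^{*}+f(S^{*},I^{*})I^{*}$, $\xi_{1}I^{*}=f(S^{*},I^{*})I^{*}+\sigma A^{*}+\omega C^{*}$, $\xi_{2}C^{*}=\phi I^{*}$, $\xi_{3}A^{*}=\rho I^{*}$, and regrouping, one should arrive at
\begin{equation*}
{}_{0}^{C}D_{t}^{\alpha}V_{2}\le
\mu\left(1-\frac{f(S^{*},I^{*})}{f(S,I^{*})}\right)(S^{*}-S)
+f(S^{*},I^{*})I^{*}\,\Theta(S,I,C,A),
\end{equation*}
where the first summand is $\le0$ by the same sign discussion as in the $E_f$ case (it uses $(H_{2})$), and $\Theta$ decomposes into ``cyclic'' blocks of the form $k-\sum_{i=1}^{k}x_{i}$ with $\prod_{i}x_{i}=1$ (one attached to the incidence and one to each return loop $I\leftrightarrow C$, $I\leftrightarrow A$), plus one genuinely incidence--dependent remainder.

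The crux --- the step I expect to be the main obstacle --- is precisely this incidence remainder. Each cyclic block is $\le0$ by the arithmetic--geometric mean inequality; after absorbing the logarithmic contributions into $\Phi$--blocks, what is left is a positive multiple of the left--hand side of $(H_{4})$, i.e. of $\big(1-\tfrac{f(S,I)}{f(S,I^{*})}\big)\big(\tfrac{f(S,I^{*})}{f(S,I)}-\tfrac{I}{I^{*}}\big)$, which is $\le0$ by hypothesis. Therefore ${}_{0}^{C}D_{t}^{\alpha}V_{2}\le0$, and in the equality case $(H_{2})$ forces $S=S^{*}$, while the vanishing of the cyclic blocks forces $I=I^{*}$, $C=C^{*}$, $A=A^{*}$; thus the largest invariant set contained in $\{{}_{0}^{C}D_{t}^{\alpha}V_{2}=0\}$ is the singleton $\{E^{*}\}$, and the fractional LaSalle invariance principle yields the global asymptotic stability of $E^{*}$. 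The delicate bookkeeping to be performed is fixing the weights $\omega/\xi_{2}$, $\sigma/\xi_{3}$ so that every non--logarithmic cross term cancels, and checking that the surviving remainder is \emph{exactly} the expression appearing in $(H_{4})$ and not merely dominated by it.
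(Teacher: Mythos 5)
Your proposal follows the paper's proof essentially verbatim: part (i) is the same positive-real-eigenvalue argument based on the sign of the constant term of the reduced cubic characteristic polynomial (your $\det M=\mathcal{D}(R_0-1)$ matches the paper's $a_3=(1-R_0)\mathcal{D}$), and part (ii) uses the identical Volterra-type Lyapunov functional, the same fractional composition inequalities from \cite{De-Leon,Huo}, the same equilibrium identities, and the same decomposition into arithmetic--geometric mean blocks plus the $(H_4)$ remainder, concluded by LaSalle's invariance principle. The ``delicate bookkeeping'' you flag is exactly what the paper carries out, and it closes as you anticipate.
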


\begin{proof}
The proof of the instability of $ E_{f} $ is based on the computation of the Jacobian
matrix of system \eqref{1}, which is given at any equilibrium point $ E(S,I,C,A) $ by
\begin{equation}
\label{12b}
\begin{pmatrix}
-\mu-\dfrac{\partial f}{\partial S}I
& -\dfrac{\partial f}{\partial S}I-f(S,I) & 0  & 0 \\
\dfrac{\partial f}{\partial S}I
& \dfrac{\partial f}{\partial S}I+f(S,I) & \omega & \sigma  \\
0 & \phi & -\xi_{2} & 0  \\
0& \rho & 0 & - \xi_{3} \\
\end{pmatrix}.
\end{equation}
We recall that $E$ is locally asymptotically stable
if all the eigenvalues $\lambda $ of (\ref{12b})
satisfy the following condition \cite{Ahmed}:
$$
\vert arg(\lambda)\vert>\dfrac{\alpha \pi}{2}.
$$
From (\ref{12b}), the characteristic equation at $ E_{f} $ is given by
\begin{equation}
\label{13}
g(\lambda)=\lambda^{3}+a_{1}\lambda^{2}+a_{2}\lambda+a_{3}=0,
\end{equation}
where
\begin{align*}
a_{1}=&\xi_{1}+\xi_{2}+\xi_{3}-f(S_{0},0),&\\
a_{2}=&\xi_{1}\xi_{2}+\xi_{1}\xi_{3}+\xi_{2}\xi_{3}
-(\xi_{2}+\xi_{3})f(S_{0},0)-\phi\omega-\rho\sigma,\\
a_{3}=&(1-R_{0})\mathcal{D}.
\end{align*}
If $R_{0}>1$, than one has $ g(0)=a_{3}<0 $ and
$ \underset{\lambda\to+\infty}\lim g(\lambda)=+\infty$. Then,
there exists $ \lambda^{*}>0 $ satisfying $ g(\lambda^{*})=0 $.
In addition, we have $ \vert arg(\lambda^{*})\vert=0<\dfrac{\alpha \pi}{2} $.
Consequently, $ E_{f} $ is unstable when $ R_{0}>1$.

We define the Lyapunov functional $V_2$ for $E^*$ as follows:
\begin{multline*}
V_{2}(S,I,C,A)
=S-S^{*}-\displaystyle{\int_{S^{*}}^{S}}\frac{f(S^{*},I^{*})}{f(X,I^*)}dX
+I-I^{*}-I^{*}\ln\left(\dfrac{I}{I^*}\right)\\
+\dfrac{\omega}{\xi_2}\left(C-C^{*}-C^{*}\ln\left(\dfrac{C}{C^*}\right)\right)
+\dfrac{\sigma}{\xi_3}\left(A-A^{*}-A^{*}\ln\left(\dfrac{A}{A^*}\right)\right).
\end{multline*}
The fractional time derivative of $V_2$ along
the positive solutions of system \eqref{1} satisfies
\begin{equation*}
_{0}^{C}D_{t}^{\alpha}V_{2}
\leq \left(1-\dfrac{f(S^{*},I^{*})}{f(S,I^*)}\right)\, _{0}^{C}D_{t}^{\alpha}S
+\left(1-\dfrac{I^{*}}{I}\right)\, _{0}^{C}D_{t}^{\alpha}I
+\dfrac{\omega}{\xi_2}\left(1-\dfrac{C^{*}}{C}\right)\, _{0}^{C}D_{t}^{\alpha}C
+\dfrac{\sigma}{\xi_3}\left(
1-\dfrac{A^{*}}{A}\right)\, _{0}^{C}D_{t}^{\alpha}A.
\end{equation*}
Applying the equalities $\Lambda=\mu S^* +f(S^{*},I^{*})I^{*}$ and
$\xi_1 I^*=f(S^{*},I^{*})I^{*}+\omega C^* +\sigma A^*$, we get
\begin{align*}
_{0}^{C}D_{t}^{\alpha}V_{2}
&\leq \mu(S^{*}-S) \left(1-\dfrac{f(S^{*},I^{*})}{f(S,I^*)}\right)
+f(S^{*},I^{*})I^{*}\left(1-\dfrac{f(S^{*},I^{*})}{f(S,I^*)}\right)\\
&\quad +\dfrac{f(S^*,I^*)f(S,I)I}{f(S,I^*)}+\omega C^*\left(
1-\dfrac{CI^*}{C^*I}\right)+\sigma A^*\left(1-\dfrac{AI^*}{A^*I}\right)\\
&\quad +\dfrac{\omega \phi}{\xi_2}I^*\left(1-\dfrac{C^*I}{CI^*}\right)
+\dfrac{\sigma \rho}{\xi_3}I^*\left(1-\dfrac{A^*I}{AI^*}\right)-f(S,I)I_1-\xi_1I\\
&\quad  +\dfrac{\omega \phi I}{\xi_2}+\dfrac{\sigma \rho I}{\xi_3}+f(S^*,I^*)I^*\\
&\leq \mu(S^{*}-S) \left(1-\dfrac{f(S^{*},I^{*})}{f(S,I^*)}\right)
+\omega C^*\left(2-\dfrac{CI^*}{C^*I}-\dfrac{C^*I}{CI^*}\right)\\
&\quad+\sigma A^*\left(2-\dfrac{AI^*}{A^*I}-\dfrac{A^*I}{AI^*}\right)
+2f(S^*,I^*)I^*+\dfrac{f(S^*,I^*)f(S,I)I}{f(S,I^*)}\\
&\quad-\dfrac{f(S^*,I^*)^2I^*}{f(S,I^*)}-f(S^*,I^*)I-f(S,I)I^*\\
&\leq \mu(S^{*}-S) \left(1-\dfrac{f(S^{*},I^{*})}{f(S,I^*)}\right)
+\omega C^*\left(2-\dfrac{CI^*}{C^*I}-\dfrac{C^*I}{CI^*}\right)\\
&\quad+\sigma A^*\left(2-\dfrac{AI^*}{A^*I}-\dfrac{A^*I}{AI^*}\right)
+f(S^*,I^*)I^*\left[-1+\dfrac{f(S,I)I}{f(S,I^*)I^*}-\dfrac{I}{I^*}
+\dfrac{f(S,I^*)}{f(S,I)}\right]\\
& \quad+f(S^*,I^*)I^*\left[3-\dfrac{f(S^*,I^*)}{f(S,I^*)}
-\dfrac{f(S,I)}{f(S^*,I^*)}-\dfrac{f(S,I^*)}{f(S,I)}\right]\\
&\leq \mu(S^{*}-S) \left(1-\dfrac{f(S^{*},I^{*})}{f(S,I^*)}\right)
+\omega C^*\left(2-\dfrac{CI^*}{C^*I}-\dfrac{C^*I}{CI^*}\right)\\
&\quad+\sigma A^*\left(2-\dfrac{AI^*}{A^*I}-\dfrac{A^*I}{AI^*}\right)
+f(S^*,I^*)I^*\bigg(1-\dfrac{f(S,I)}{f(S,I^*)}\bigg)\bigg(
\dfrac{f(S,I^*)}{f(S,I)}-\dfrac{I}{I^*}\bigg)\\
&\quad +f(S^*,I^*)I^*\left[3-\dfrac{f(S^*,I^*)}{f(S,I^*)}
-\dfrac{f(S,I)}{f(S^*,I^*)}-\dfrac{f(S,I^*)}{f(S,I)}\right].
\end{align*}
Since the arithmetic mean is greater than or equal
to the geometric mean, it is clear that
\begin{equation*}
\begin{array}{l}
2-\dfrac{CI^*}{C^* I}-\dfrac{C^*I}{CI^* }\leq 0, \\[12 pt]
2-\dfrac{AI^*}{A^* I}-\dfrac{A^*I}{AI^* }\leq 0,\\[12 pt]
3-\dfrac{f(S^*,I^*)}{f(S,I^*)}-\dfrac{f(S,I)}{f(S^*,I^*)}
-\dfrac{f(S,I^*)}{f(S,I)}\leq 0,
\end{array}
\end{equation*}
and the equalities hold only for $S=S^*$, $I=I^*$, $C=C^*$ and $A=A^*$.
Note that
$$
1-\dfrac{f(S^*,I^*)}{f(S,I)} \geq 0 \quad \text{for}\quad S\geq S^*
$$
and
$$
1-\dfrac{f(S^*,I^*)}{f(S,I)} < 0 \quad \text{for}\quad S<S^*.
$$
This leads to
\begin{equation*}
(S^*-S)\left(1-\dfrac{f(S^*,I^*)}{f(S,I^*)}\right)\leq 0.
\end{equation*}
Therefore, $_{0}^{C}D_{t}^{\alpha}V_{2}\leq0$. Further, the largest invariant set in 
$\lbrace (S,I,C,A)\in \mathbb{R}^{4}:\, _{0}^{C}D_{t}^{\alpha}V_{2} \leq 0 \rbrace$
is the singleton $E^{*}$. The global stability of $ E^{*} $ follows
from LaSalle's invariance principle.
\hfil \qed
\end{proof}


\section{Fractional optimal control of the model}
\label{sec:optctrl}

In this section, our main aim is to minimize the number of HIV infected individuals and,
simultaneously, to reduce the cost associated with such strategies. This is achieved
by introducing public education into communities, as a preventive measure time dependent
control $v_{1}(t)$, to start ART treatment, and move $I$ individuals to the $C$ compartment,
while control $ v_{2}(t) $ is designed to provide effective treatment to infected individuals
with AIDS symptoms. Thus, we consider the following fractional optimal control problem:
\begin{equation}
\label{14}
\min J(I(t),v_{1}(t),v_{2}(t))=\int_{0}^{t_{f}}\left[
I(t)+A(t)+B_{1}\delta v_{1}^{2}(t)+B_{2}\delta v_{2}^{2}(t)\right] dt
\end{equation}
subject to the fractional control system
\begin{equation}
\label{eq:SICA_control}
\begin{cases}
_{0}^{C}D_{t}^{\alpha}S(t) = \Lambda  - \mu S(t)- f\left(S(t),I(t)\right)I(t),\\[0.2 cm]
_{0}^{C}D_{t}^{\alpha}I(t) = f\left(S(t),I(t)\right)I(t) - (\rho + v_{1}(t) + \mu)I(t)
+ v_{2}(t) A(t) + \omega C(t), \\[0.2 cm]
_{0}^{C}D_{t}^{\alpha}C(t) = v_{1}(t) I(t) - (\omega + \mu)C(t),\\[0.2 cm]
_{0}^{C}D_{t}^{\alpha}A(t) =  \rho \, I(t) - (v_{2}(t)+ \mu + d) A(t),
\end{cases}
\end{equation}
with given initial conditions
\begin{equation}
\label{eq:gic}
S(0)=S_0\geq0, \
I(0)=I_0\geq0, \
C(0)=C_0\geq0, \
A(0)=A_0\geq 0.
\end{equation}

\begin{remark}
While it is not necessary to use quadratic controls in the cost functional \eqref{14}, 
this is the most common way to penalize the use of controls: see, e.g., 
\cite{MR2316829} or \cite{MR3815138}.
\end{remark}

The parameters $0<B_{1}, B_{2}<\infty $ are positive weights,
$\delta$ is the maximum number of infectious individuals for
the problem without control, $ B_{i}\delta v_{i}^{2}(t) $, $ i=1,2 $,
is the cost of applying control effort $ v_{i} $, and $ t_{f} $
is the duration of the control program. The set of admissible control functions is
\begin{equation}
\label{omega:set}
\mathcal{U}=\left\lbrace (v_{1}(\cdot),v_{2}(\cdot))
\in L^{\infty}(0,t_{f}) : 0 \leq v_{i}(t)
\leq {v_i}_{\max} \leq 1,\: i=1,2,\: \forall t
\in [0,t_{f}]\right\rbrace.
\end{equation}
To obtain the necessary optimality conditions for our fractional
optimal control problem, we define the Hamiltonian function as follows:
\begin{equation}
\begin{split}
H&=I+A+B_{1}\delta v_{1}^{2}(t)+B_{2}\delta v_{2}^{2}(t)
+\xi_{1}\left(  \Lambda  - \mu S(t)- f\left(S(t),I(t)\right)I(t)\right)\\
&\quad+\xi_{2}\left( f\left(S(t),I(t)\right)I(t) - (\rho + v_{1}(t) + \mu)I(t)
+ v_{2}(t) A(t) + \omega C(t)\right)\\
&\quad +\xi_{3}\left(  v_{1}(t) I(t) - (\omega + \mu)C(t)\right)
+\xi_{4}\left( \rho \, I(t) - (v_{2}(t)+ \mu + d) A(t)\right).
\end{split}
\end{equation}
Applying \cite[Theorem 4.1 and Lemma 4.2]{Kheiri}, the necessary conditions
for the optimality of \eqref{14} are given by \eqref{eq:SICA_control}
supplemented with the adjoint system
\begin{equation}
\label{adj2:system}
\begin{cases}
_{0}^{C}D_{t}^{\alpha}\xi_{1}(t')
= - \mu \xi_{1}(t')+ \dfrac{\partial f}{\partial S}I(t')(\xi_{2}(t')-\xi_{1}(t')),\\[0.2 cm]
_{0}^{C}D_{t}^{\alpha}\xi_{2}(t')
= 1-\left( \dfrac{\partial f}{\partial I}I(t')+f(S,I)\right)\xi_{1}(t')
+ \xi_{3}(t')v_{1}(t') +\rho\xi_{4}(t')\\[0.2 cm]
\qquad +\left( \dfrac{\partial f}{\partial I}I(t')
+f(S,I)- \rho - v_{1}(t')-\mu\right) \xi_{2}(t'),\\[0.2 cm]
_{0}^{C}D_{t}^{\alpha}\xi_{3}(t') = \omega\xi_{2}(t')
- (\omega + \mu)\xi_{3}(t'),\\[0.2 cm]
_{0}^{C}D_{t}^{\alpha}\xi_{4}(t') =  1+v_{2}(t')\xi_{2}(t')
- (v_{2}(t') + \mu + d) \xi_{4}(t'),
\end{cases}
\end{equation}
with $t'=t_{f}-t$, the initial and transversality conditions
\begin{equation}
\label{ini:cond}
\begin{gathered}
S(0)=S_0\geq0, \
I(0)=I_0\geq0, \
C(0)=C_0\geq0, \
A(0)=A_0\geq 0,\\
\xi_{1}(t_{f})=\xi_{2}(t_{f})=\xi_{3}(t_{f})=\xi_{4}(t_{f})=0.
\end{gathered}
\end{equation}
Furthermore, the optimal controls $ v_{1}^{*} $
and $ v_{2}^{*} $ are given by
\begin{equation}
\label{optim:cond}
\begin{split}
v_{1}^{*}&=\min\left( {v_1}_{\max},\max\left( 0,
\dfrac{(\xi_{2}-\xi_{3})I}{2B_{1}\delta}\right) \right),\\
v_{2}^{*}&=\min\left( {v_2}_{\max},\max\left( 0,
\dfrac{(\xi_{4}-\xi_{2})A}{2B_{2}\delta}\right) \right) .
\end{split}
\end{equation}


\section{Applications and numerical simulations}
\label{sec:5}

The Pontryagin Maximum Principle is used to numerically
solve the optimal control problem \eqref{14}--\eqref{omega:set},
as discussed in Section~\ref{sec:optctrl}, in the classical ($\alpha = 1$)
and fractional ($\alpha<1$) cases, using the
predict-evaluate-correct-evaluate (PECE) method of Adams--Basforth--Moulton
\cite{diethelm2005algorithms} implemented in MATLAB.
First, we solve system \eqref{eq:SICA_control} by the PECE procedure
with initial values for the state variables based on Moroccan data \cite{lotfi}:
\[
S_0=(N_0-(2+9))/N_0,\quad I_0=2/N_0, \quad C_0=0,\quad A_0=9/N_0,
\]
with $N_0$=23023935 and a guess for the control over the time interval $[0,t_f]$,
thereby obtaining  the values of the state variables $S$, $I$, $C$ and $A$.
As in \cite{Rosa}, a change of variable is applied to the adjoint
system \eqref{adj2:system} and to the transversality conditions,
obtaining the  fractional initial value problem
\eqref{adj2:system}--\eqref{ini:cond}. Such IVP
is also solved with the PECE procedure, and
the values of the co-state variables $\xi_i$, $i=1,\ldots,4$, are obtained.
The controls are then updated by a convex combination of the previous controls
and the current values computed according to \eqref{optim:cond}. This procedure
is repeated iteratively until the values of all the variables and the values
of the controls are very close to the ones of the previous iteration.
The solutions of the classical model were successfully confirmed
by a classical forward-backward scheme, also implemented in MATLAB.

Solving the initial system \eqref{1} with $\alpha=1$ (classical derivatives),
we notice that the maximum number of $I$ individuals, $\delta$,
is $1.24\times 10^{-7}$. This value is the one we use in numerical experiments.
We also use ${v_1}_{\max}={v_2}_{\max} = 1$, $B_1=B_2=2.5$, and the other
parameters are fixed according to Table~\ref{tab:param} (see \cite{lotfi}),
where $ \beta $ is the effective transmission rate.
\begin{center}
\captionof{table}{Parameter values of system \eqref{1}.}\label{tab:param}
\begin{tabular}{ccc} \hline
parameter & description & value   \\ \hline
$\mu$ & Natural death rate & 1/74.02   \\		
$\Lambda$ & Recruitment rate & 2.19$\mu$  \\		
$\beta$ & HIV transmission rate & 0.755 \\
$\phi$ & HIV treatment rate for $ I $ individuals & 1\\
$\rho$ & Default treatment rate for $ I $ individuals & 0.1 \\
 $ \sigma $ & AIDS treatment rate & 0.33\\
 $\omega$ & Default treatment rate for $ C $ individuals & 0.09\\
$ d $ & AIDS induced death rate & 1\\
\hline
\end{tabular}
\end{center}
Because the World Health Organization (WHO)
goals for most diseases are usually fixed
for five years periods, we considered $t_f = 5$.

Without loss of generality, in what follows
we consider the incidence function to be
\[
f(S,I)=\beta S.
\]
This function is chosen because, when compared with other incidence functions,
this was the one that better fitted to real data \cite{lotfi}.

\begin{figure}[!htb]
\centering
\begin{subfigure}[b]{0.46\textwidth}\centering
\includegraphics[scale=0.46]{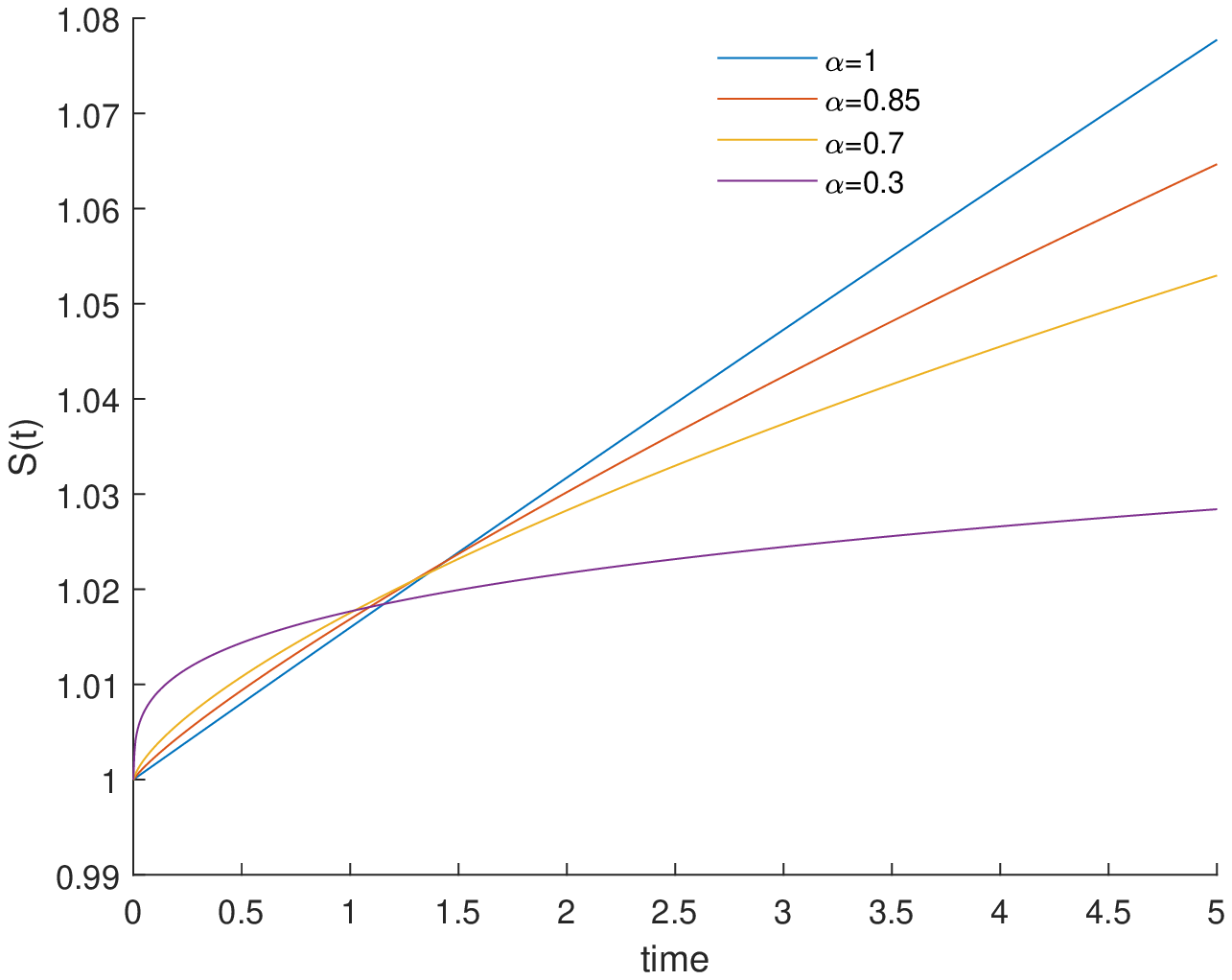}
\caption{Susceptible individuals.}
\end{subfigure}\hspace*{1cm}
\begin{subfigure}[b]{0.46\textwidth}
\centering
\includegraphics[scale=0.46]{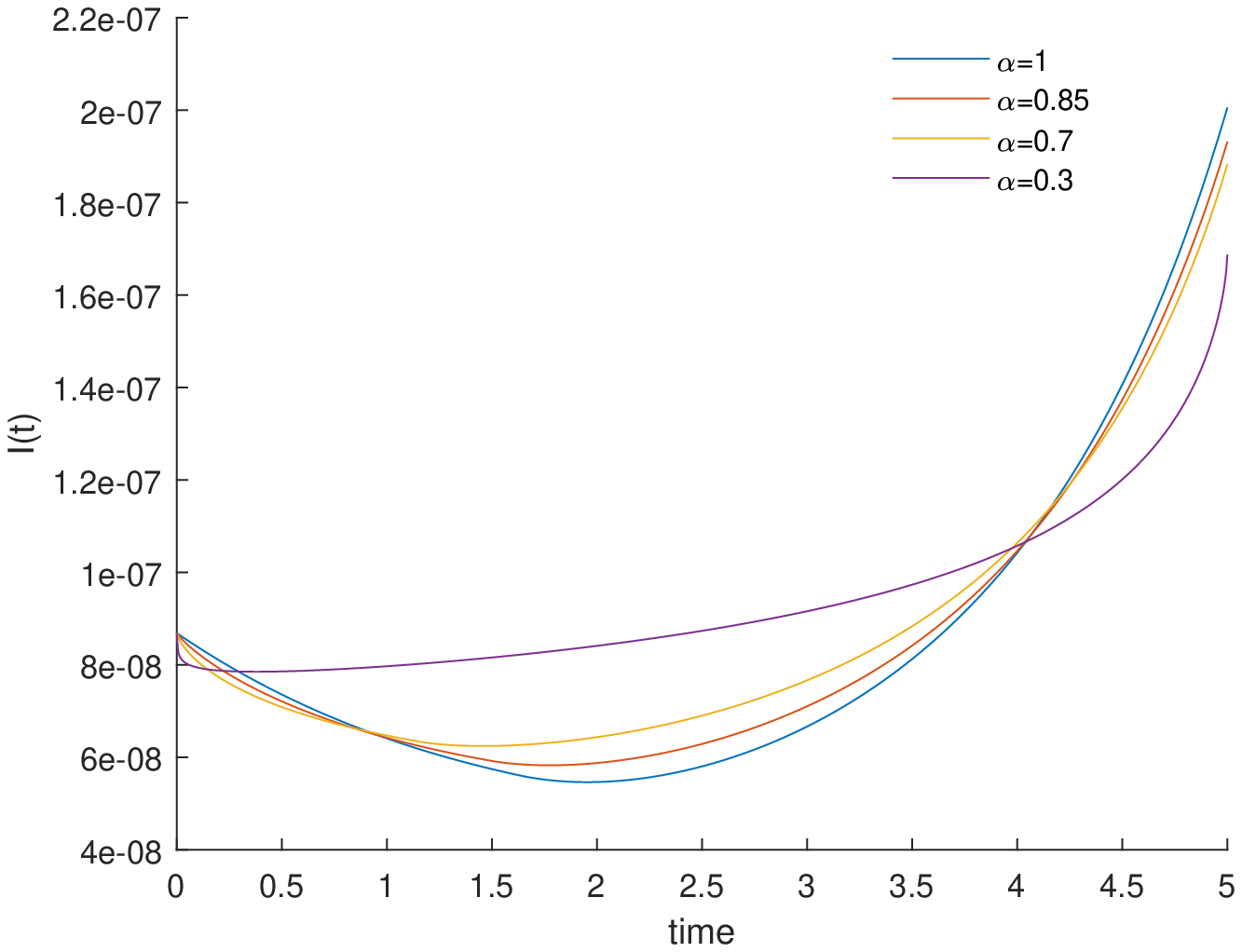}
\caption{HIV infected individuals.}
\end{subfigure}\\
\begin{subfigure}[b]{0.46\textwidth}
\centering
\includegraphics[scale=0.46]{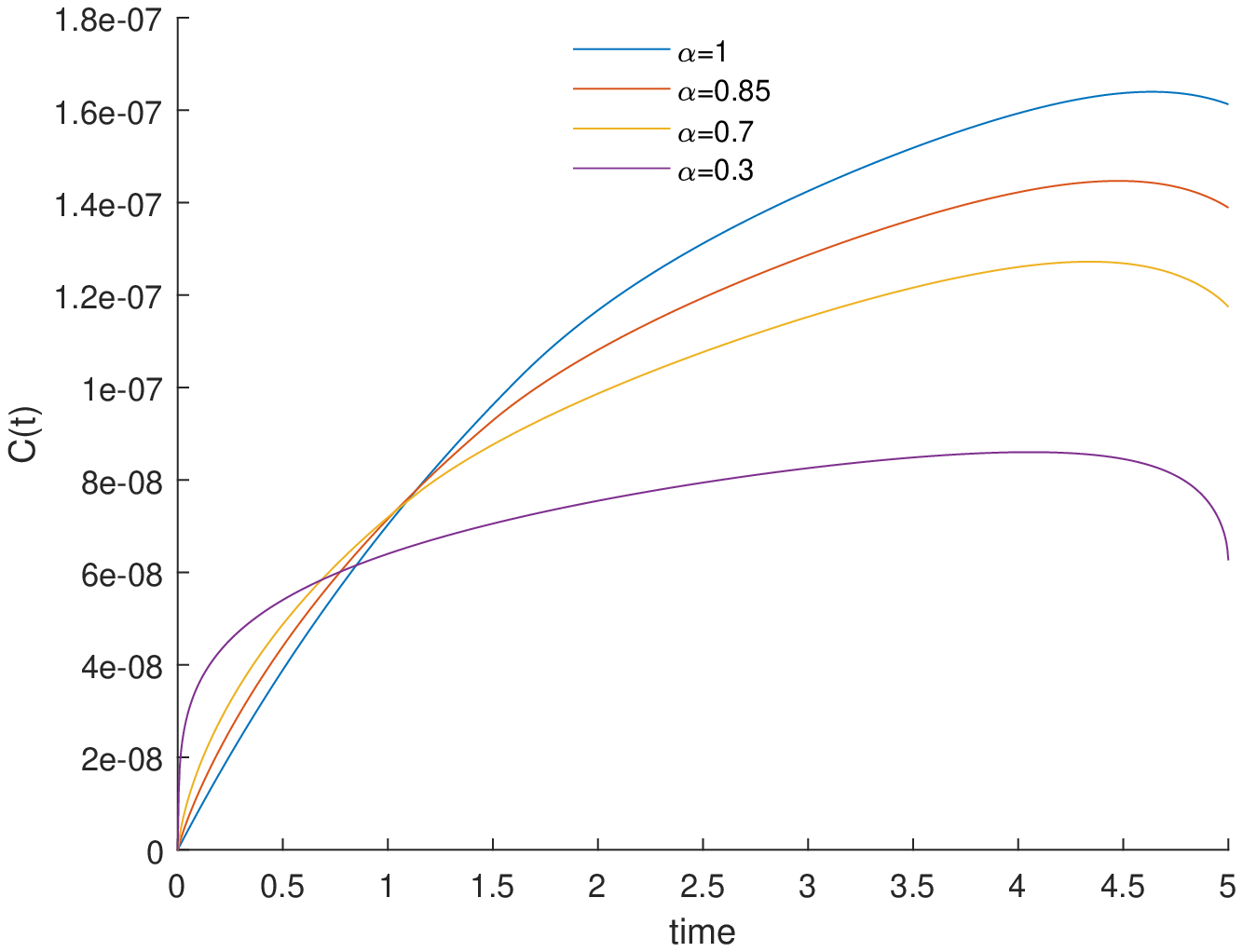}
\caption{HIV infected ind. under ART
treatment}\label{fig:C_sev:alphas}
\end{subfigure}\hspace*{1cm}
\begin{subfigure}[b]{0.46\textwidth}
\centering
\includegraphics[scale=0.46]{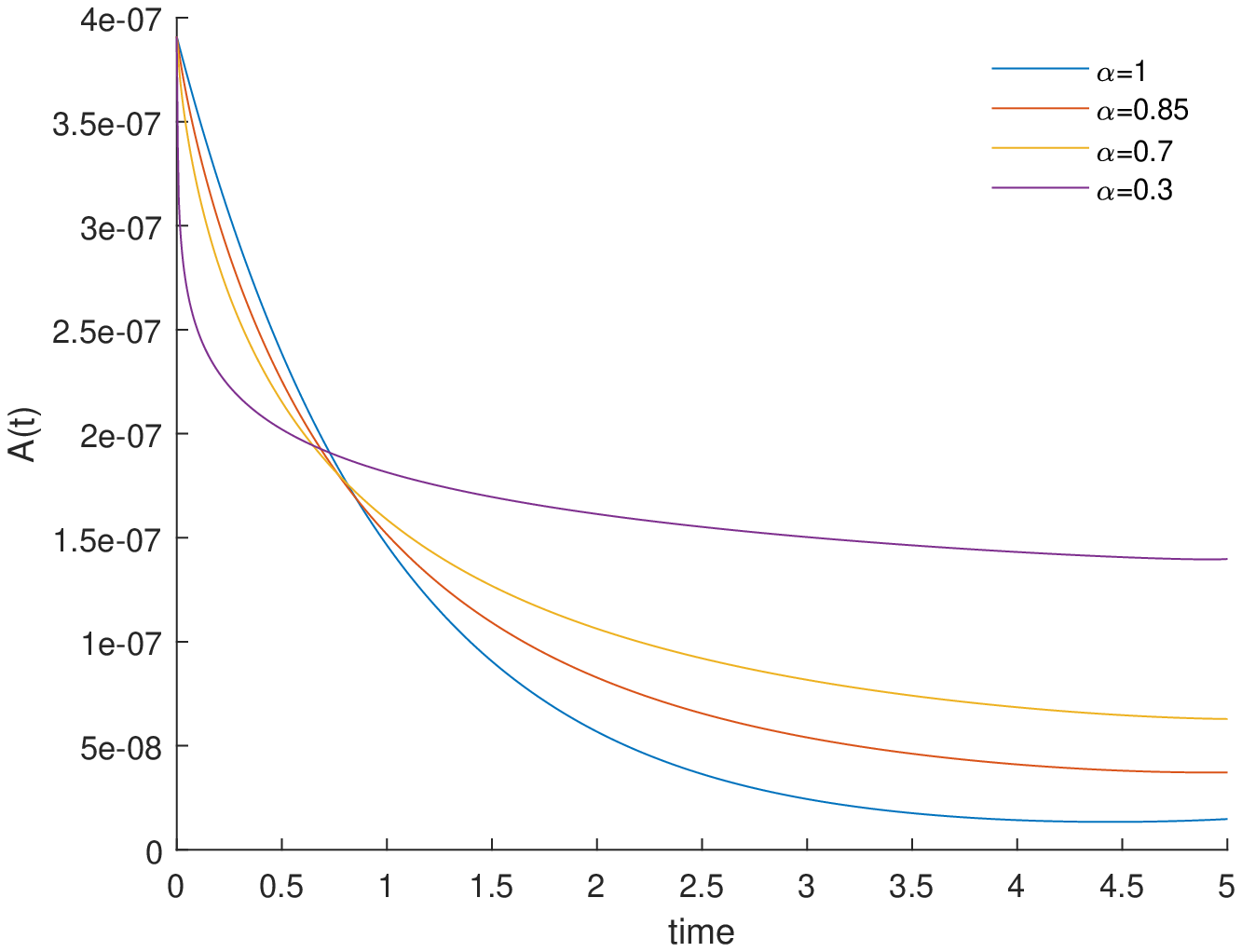}
\caption{ HIV infected individuals with AIDS
symptoms.}
\end{subfigure}
\caption{State variables of the FOCP \eqref{14}--\eqref{optim:cond},
with values from Table~\ref{tab:param}, weights $B_1=B_2=2.5$,
and the fractional order derivatives
$\alpha=1.0,$ $0.85$, $0.7$ and $0.3$.}
\label{fig:states_var:alphas}
\end{figure}

In order to perceive the effect of the derivative order on the variation
of the variables of the problem, we considered, as in \cite{TB2019},
some fractional order derivatives, namely $\alpha=1.0$, $0.85$, 
$0.70$ and $0.3$. In Figure~\ref{fig:states_var:alphas}, \ref{fig:v1_var:alphas},
and \ref{fig:v2_var:alphas}, we have the solutions of the fractional
optimal control problem (FOCP) for that values of $\alpha$. We observe
that a change in the derivative order corresponds to variations of the
state variables and of the first control, $v_1$. On the other hand,
the second control, $v_2$, does not vary with that change, remaining null.
Treatment of AIDS individuals was considered cheaper, i.e., smaller values
for $B_2$ (weight of $v_2$ in the cost functional) were considered
but $v_2$ has not changed. This means that treating people with AIDS symptoms
is useless when we can act over infected people with ART treatment.

The existence of an endemic situation ($R_0=7.534 > 1$ \cite{lotfi}),
the existence of a high percentage of susceptible individuals and
a control that vanishes at the end of the time interval motivates that,
in the end of the time interval, the number of $I$ individuals exceeds
its initial value. Other values of $\alpha$, lower than one, were also
tested, but the results do not changed qualitatively.  According with
Figure~\ref{fig:states_var:alphas}, decreasing the derivative order,
$\alpha$, means that, after a certain value of time, it decreases
the number of individuals of compartments $S$ and $C$ while increasing
the value of individuals in compartment $A$. We note that the variation
of $\alpha$ has little impact in the variation of infected individuals, $I$.

\begin{remark}
Solutions of adjoint variables can be easily included in numerical simulations:
see, e.g., \cite{MR3771538}. While in \cite{MR3771538} the inclusion of the adjoint 
variables is important, because of the non-regularity of the control variable, 
which is obviously explained by those variables, here, however, the inclusion 
of the adjoint variables do not bring new insights:
the nonzero control $v_1$ has a classical evolution, starting at its maximum, one,  
after it decreases and vanishes at the end of the time interval. This behaviour is common 
to many known examples and the adjoint variables, in turn, follow qualitatively this evolution, 
with different magnitudes.
\end{remark}

\begin{figure}[!htb]
\centering
\begin{subfigure}[b]{0.46\textwidth}\centering
\includegraphics[scale=0.46]{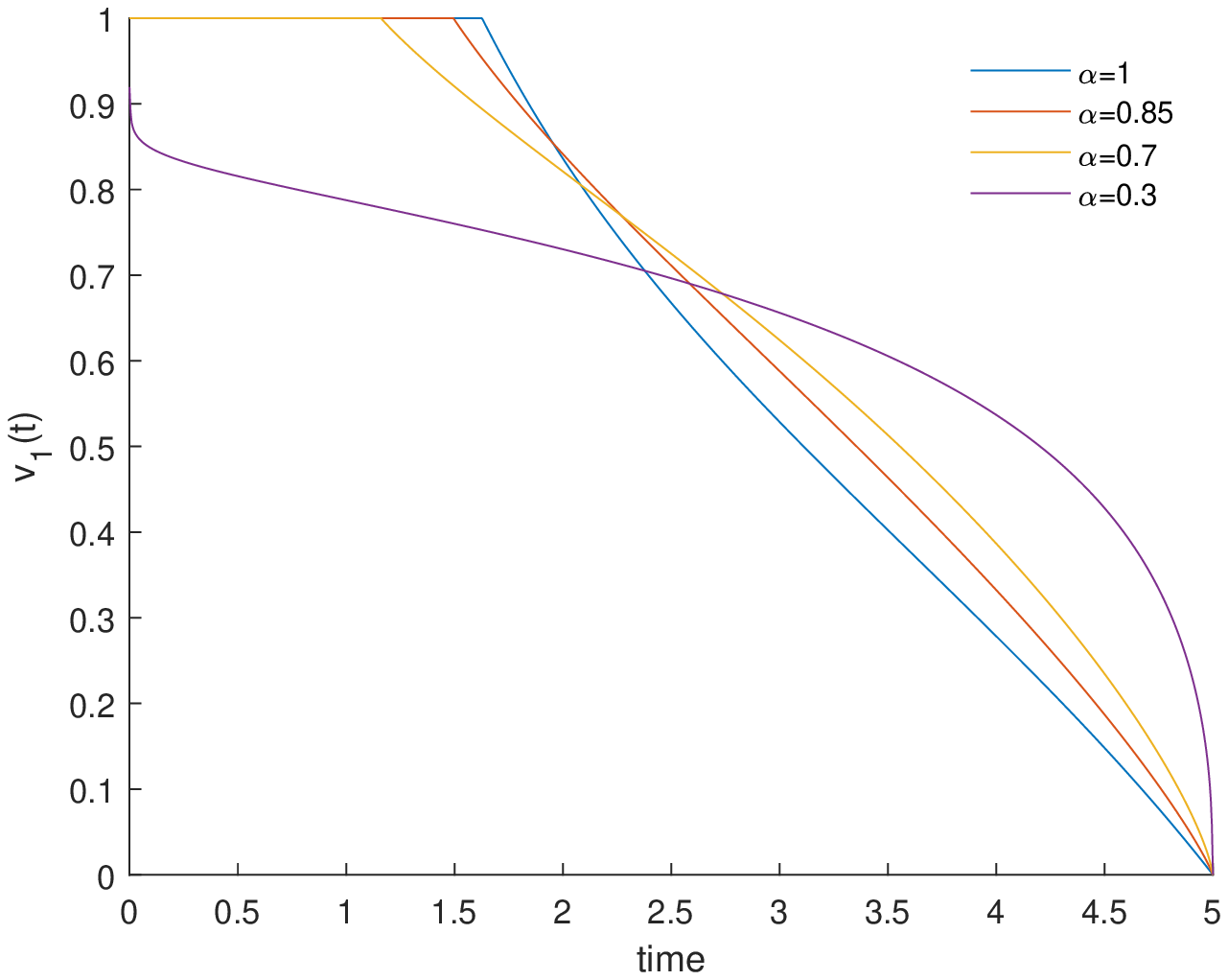}
\caption{Optimal control $v_1$.}\label{fig:v1_var:alphas}
\end{subfigure}\hspace*{1cm}
\begin{subfigure}[b]{0.46\textwidth}
\centering
\includegraphics[scale=0.46]{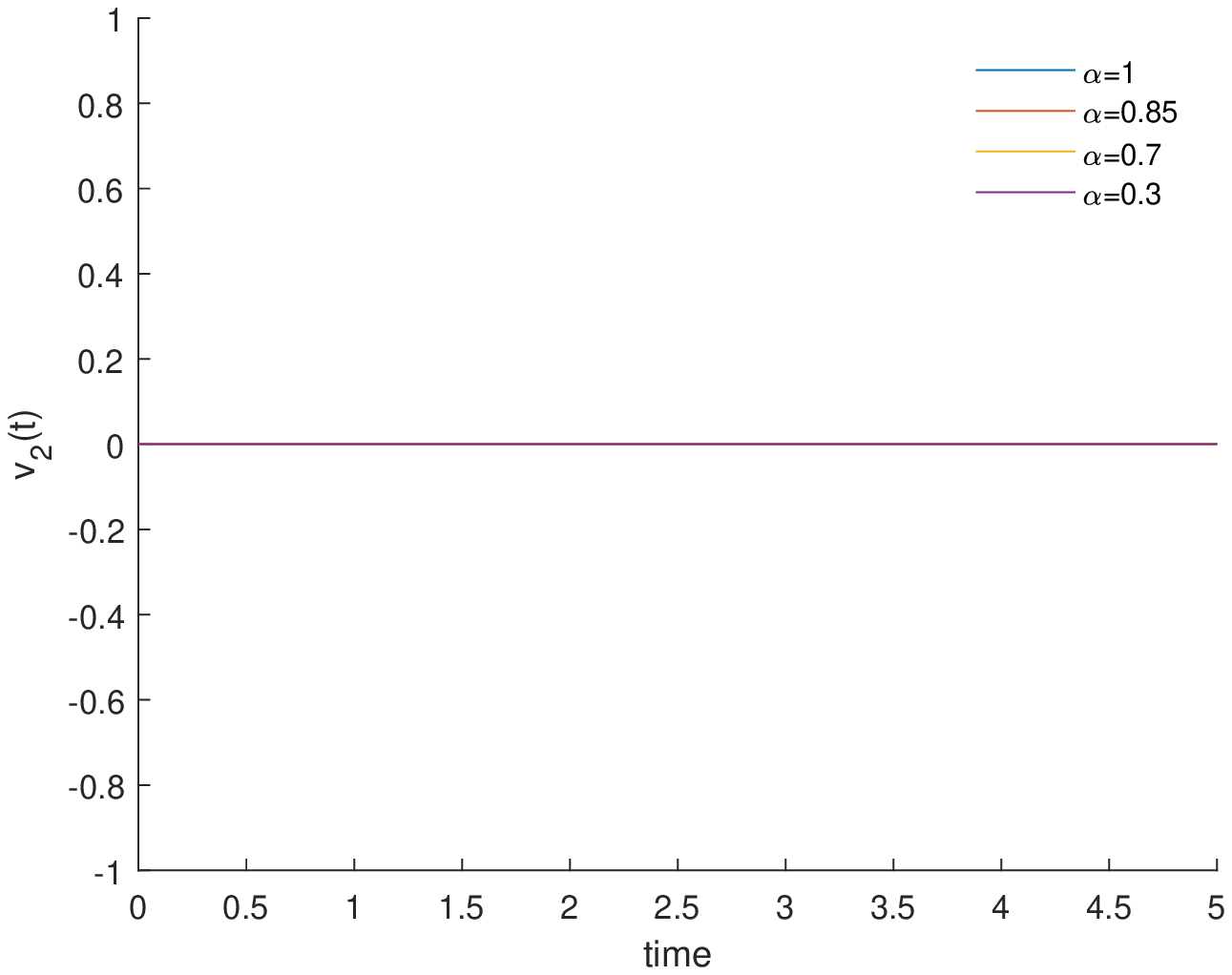}
\caption{Optimal control $v_2$.}\label{fig:v2_var:alphas}
\end{subfigure}\\
\begin{subfigure}[b]{0.46\textwidth}
\centering
\includegraphics[scale=0.46]{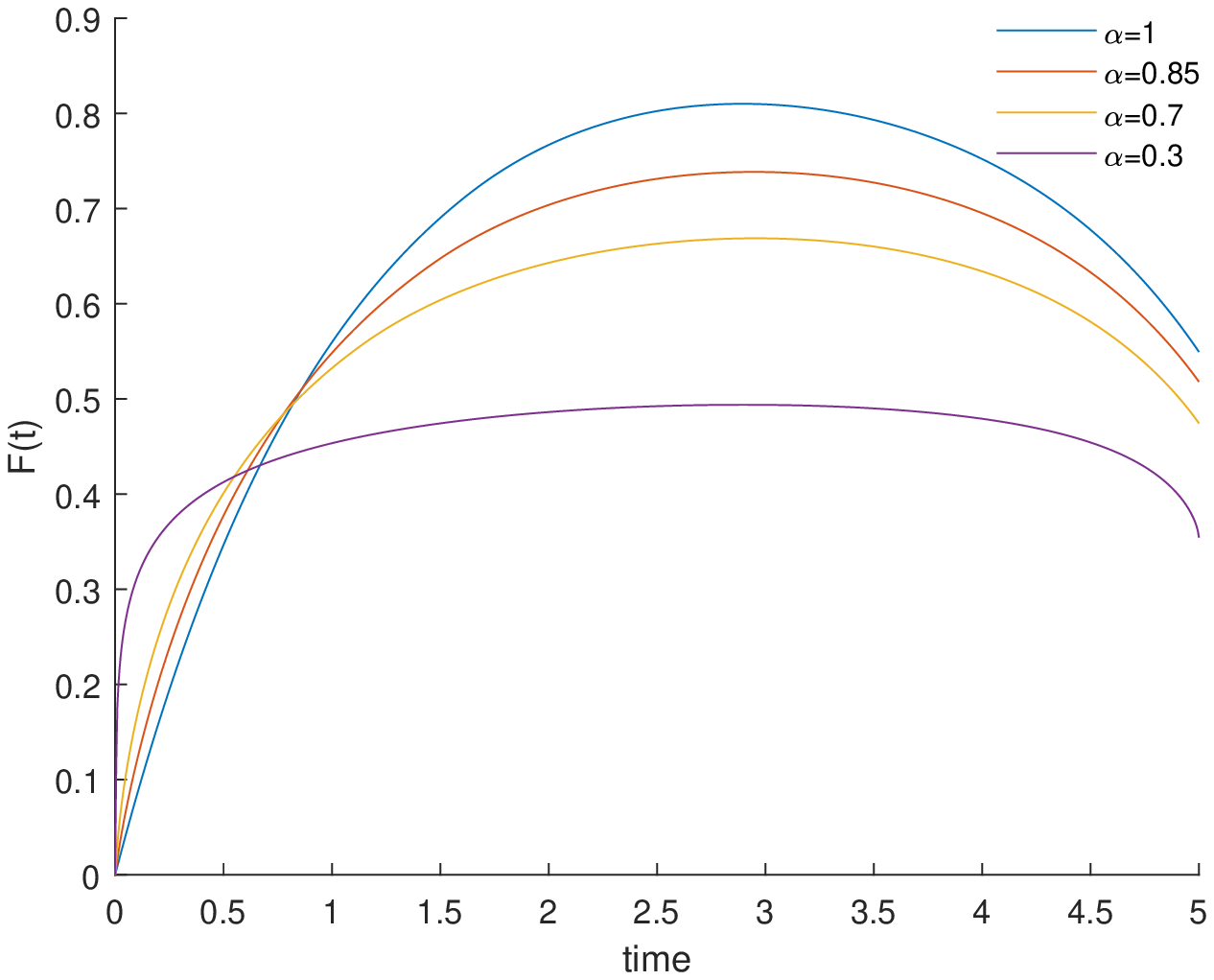}
\caption{Efficacy function $F(t)$.}\label{fig:efficacy:alphas}
\end{subfigure}
\caption{Control functions $v_1(t)$ and $v_2(t)$ associated
to the FOCP \eqref{14}--\eqref{optim:cond} with values from
Table~\ref{tab:param}, weights $B_1=B_2=2.5$, and the fractional 
order derivatives $\alpha=1.0,$ $0.85$, $0.7$ and $0.3$.}
\label{fig:v1_v2:alphas}
\end{figure}

The efficacy function \cite{rodrigues2014cost} is defined by
\begin{equation}
\label{efficacy_function}
F(t)=\frac{i(0)-i^*(t)}{i(0)}=1-\frac{i^*(t)}{i(0)},
\end{equation}
where $i^*(t)=I^*(t)+A^*(t)$ is the optimal solution of the fractional optimal
control and $i(0)=I(0)+A(0)$ is the corresponding initial condition. This
function measures the proportional variation in the number of infected individuals,
HIV infected or infected with AIDS, after the application of the controls
$\{v_1^*,v_2^*\}$, by comparing the number of infectious individuals
at time $t$ with the initial value $i(0)$. In Figure~\ref{fig:efficacy:alphas},
the efficacy function is exhibited for the three considered values of $\alpha$.
Interestingly, the classical model is the most effective.

\begin{figure}[!htb]
\centering
\begin{subfigure}[b]{0.46\textwidth}\centering
\includegraphics[scale=0.46]{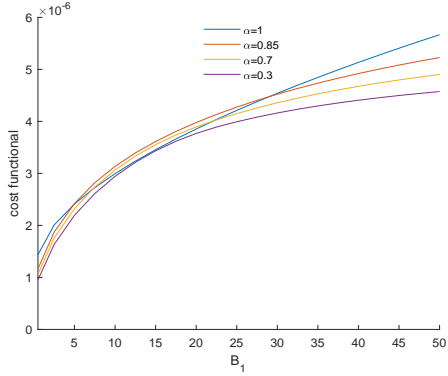}
\caption{Evolution of the cost functional $J$ \eqref{14}.}\label{fig:evol_J}
\end{subfigure}\hspace*{1cm}
\begin{subfigure}[b]{0.46\textwidth}
\centering
\includegraphics[scale=0.46]{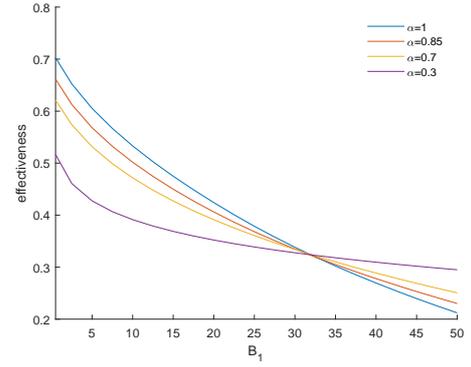}
\caption{Evolution of the effectiveness $\overline F$ \eqref{eq:F}.}\label{fig:evol_F}
\end{subfigure}
\caption{Impact of the variation of the weight $B_1$ on the cost functional value
$J$ (left) and on the effectiveness measure $\overline{F}$ (right)
for the fractional order derivatives $\alpha=1.0,$ $0.85,$ $0.7$ and $0.3$.}
\label{fig:evol_J_F}
\end{figure}

Some summary measures are presented to evaluate the cost
and the effectiveness of the proposed fractional
control measures during the intervention period. The
total cases averted by the intervention during
the time period $t_f$ is defined in \cite{rodrigues2014cost} by
\begin{equation}
\label{eq:A}
AV=i(0)i_f-\int_0^{t_f}i^*(t)~dt,
\end{equation}
where $i^*(t)=I^*(t)+A^*(t)$ is the optimal solution corresponding
to the fractional optimal controls $\{v_1^*, v_2^*\}$
and $i(0)=I(0)+A(0)$ is the corresponding initial condition.

Effectiveness is defined as the proportion of cases averted
on the total cases possible under no intervention \cite{rodrigues2014cost}:
\begin{equation}
\label{eq:F}
\overline{F}=\frac{AV}{ i(0)t_f}
=1-\frac{\displaystyle \int_0^{t_f}i^*(t)~dt}{i(0)t_f}.
\end{equation}
The total cost associated with the intervention
is defined in  \cite{rodrigues2014cost} by
\begin{equation}
\label{eq:TCI}
TC=\int_0^{t_f} C_1 \, v_1^*(t)I^*(t)+C_2 \, v_2^*(t)A^*(t)~dt,
\end{equation}
where $C_i$ corresponds to the per person unit cost
of the two possible interventions: (i)~detection
and treatment of HIV infected individuals ($C_1$);
(ii)~and detection and treatment of  HIV infected
individuals with AIDS ($C_2$).
Following \cite{rodrigues2014cost},
the average cost-effectiveness ratio is given by
\begin{equation}
\label{eq:ACER}
ACER=\frac{TC}{AV}.
\end{equation}

\begin{table}[!htb]
\centering
\caption{Summary of cost-effectiveness measures for classical ($\alpha = 1$)
and fractional ($0<\alpha <1$) HIV-AIDS disease optimal control problem. 
Parameters according to Table~\ref{tab:param} and $C_1=C_2=1.$}\label{tab:efficacy}
\begin{tabular}{c@{\hspace*{1cm}}c@{\hspace*{1cm}}c@{\hspace*{1cm}}c@{\hspace*{1cm}}c}
\toprule
$\alpha$ & $AV$  & $TC$ & $ACER$ & $\overline{F}$  \\[1mm] \midrule
 1.0  &  1.55815e-06  &  2.21065e-07 &    0.141877  &  0.652268\\
 0.85 &   1.46382e-06  &  2.37665e-07 &    0.16236  &  0.612779\\
0.70  &  1.36489e-06  &  2.54213e-07  &   0.186252  &  0.571365\\
0.30 &   1.09485e-06  &  2.94279e-07  &   0.268786  &  0.458322\\
 \bottomrule
\end{tabular}
\end{table}

In Table~\ref{tab:efficacy}, the cost-effectiveness measures,
for our fractional optimal control problem, are summarized.
Those results show the effectiveness of the control $v_1$
to reduce HIV infectious individuals and the superiority
of the classical model ($\alpha=1$).

The impact of the variation of the weight $B_1$ over the cost
functional is displayed in Figure~\ref{fig:evol_J}, and
over the effectiveness measure $\overline{F}$ is presented
in Figure~\ref{fig:evol_F}. When the cost of treatment increases,
we observe that: (i) the fractional model can be more effective
in reduction of $I+A$ individuals; (ii) the cost functional also
increases and the classical model is the one with lower values.

The  fractional model is more effective when treatment is expensive,
i.e., hard to implement. To illustrate this behaviour, we considered
$B_1=40$ and determined the derivative order with the best effectiveness
measure. The highest value of effectiveness, $0.309505$, was attained
with $\alpha=0.30$, a quite low value. The respective value for the
classical model ($\alpha=1$) is $0.269456$.  Figures~\ref{fig:states_alpha 023}
and \ref{fig:v1__F:023} compare the fractional solution with the classical one.
When compared with the classical model, the variables $S$ and $C$,
of the fractional model, behave analogously to above cases with $B_1=2.5$
and other values of $\alpha$, but move further apart. With respect to $I+A$
individuals, we see that, in average, the fractional solution is lower than
the classical solution. We also notice that the first control of the fractional
model is more intense than the one of the classical model in most part of the 
time interval. Such behaviour of control $v_1$ can be the reason of the higher 
effectiveness of such model. The second control is not exhibited because it remains null.
The efficacy function, presented in Figure~\ref{fig:efficacy:023}, confirms
that the fractional model is the best choice in this case.
\begin{figure}[!htb]
\centering
\begin{subfigure}[b]{0.46\textwidth}\centering
\includegraphics[scale=0.46]{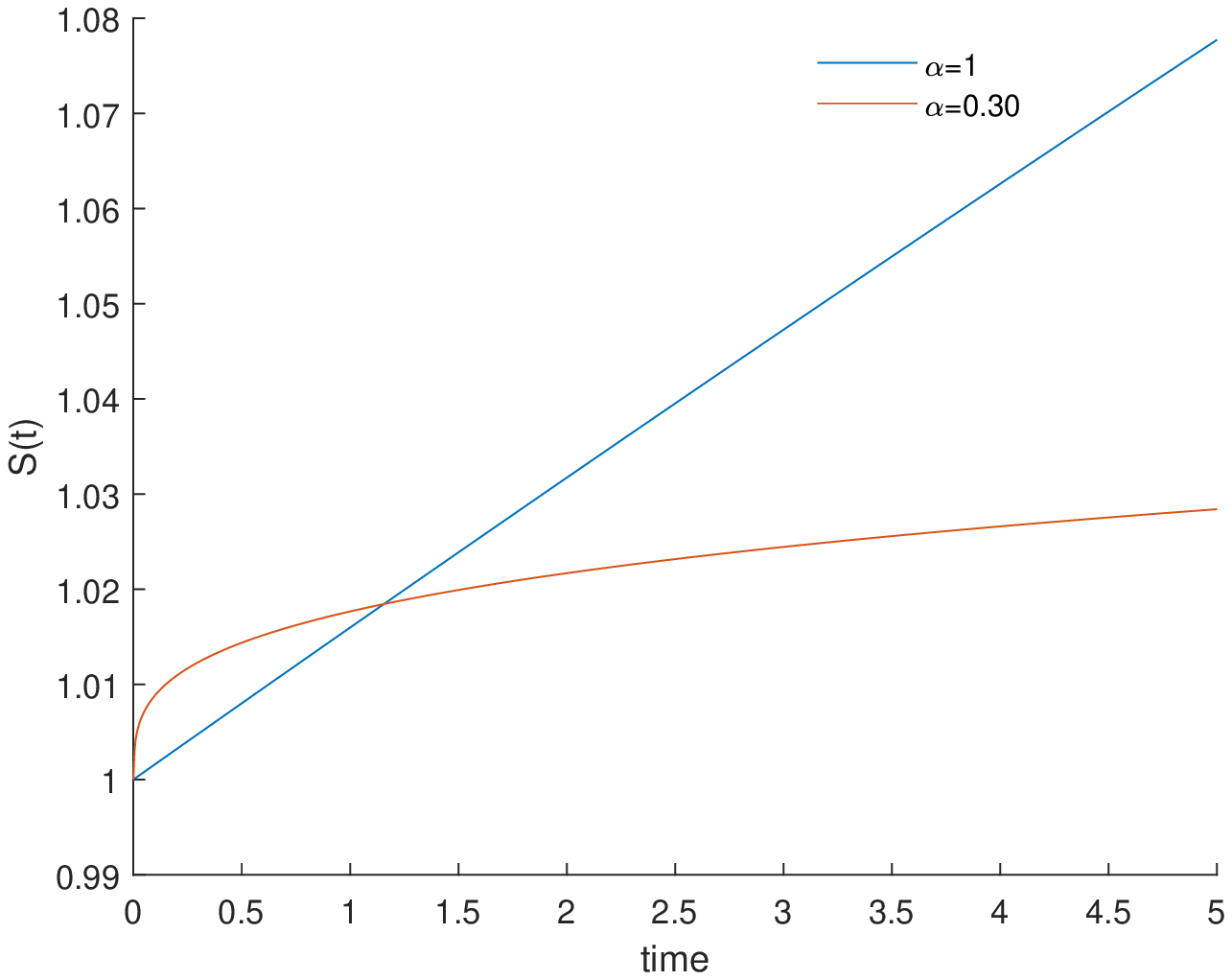}
\caption{Susceptible individuals.}
\end{subfigure}\hspace*{1cm}
\begin{subfigure}[b]{0.46\textwidth}
\centering
\includegraphics[scale=0.46]{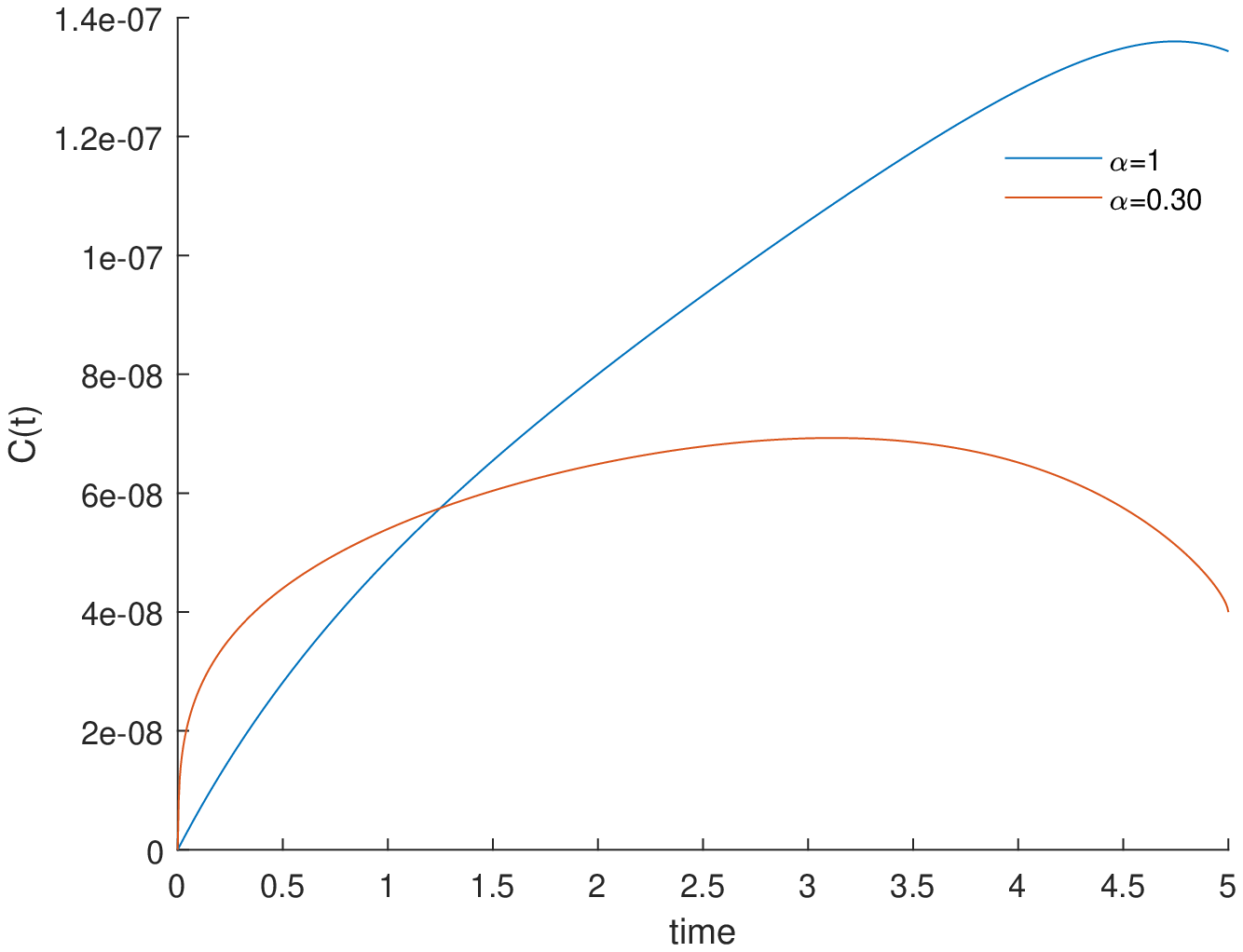}
\caption{HIV infected ind. under ART
treatment}\label{fig:C_023}\end{subfigure}\\
\begin{subfigure}[b]{0.46\textwidth}
\centering
\includegraphics[scale=0.46]{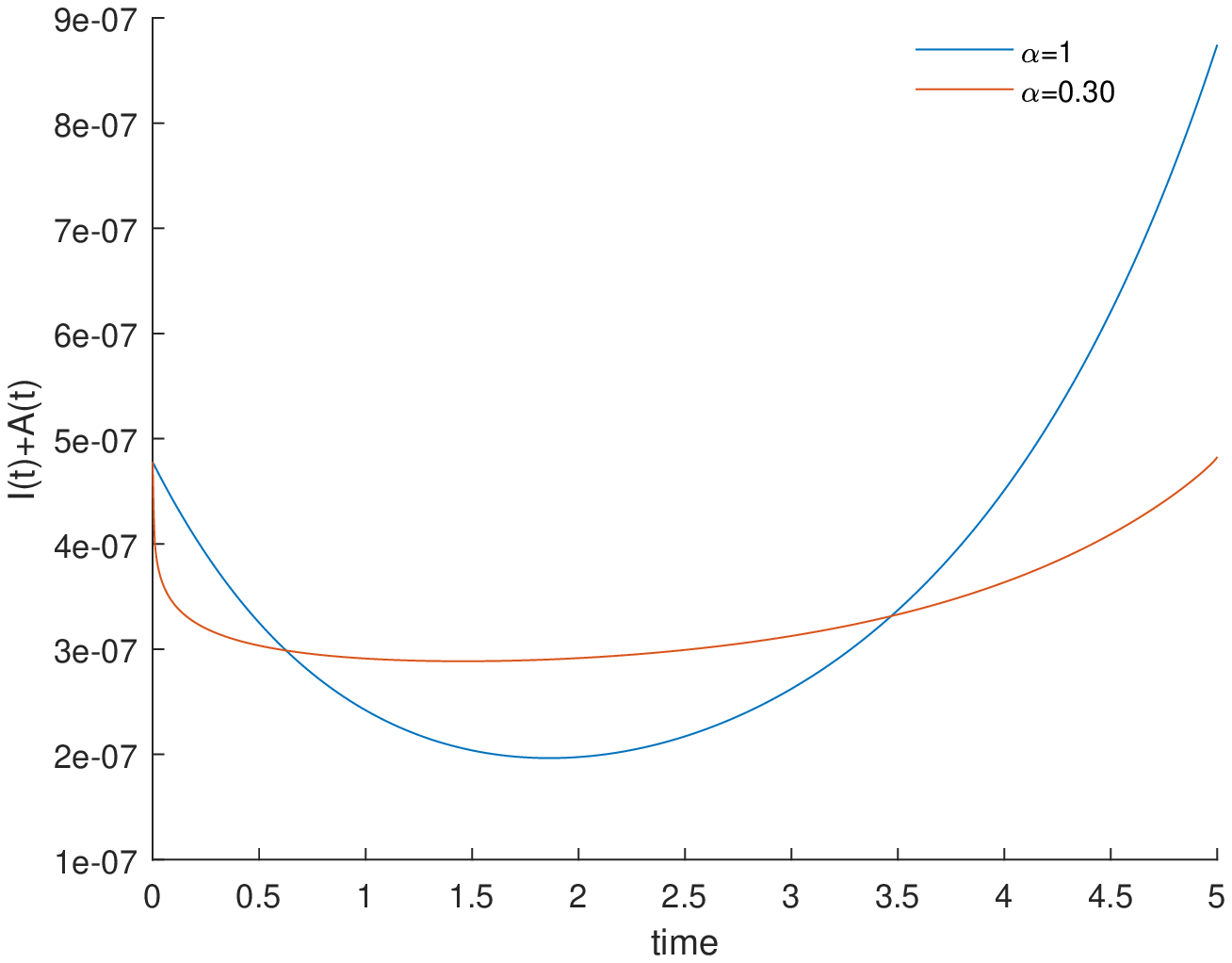}
\caption{HIV infected and HIV infected with AIDS symptoms individuals.}
\end{subfigure}
\caption{State variables of the FOCP \eqref{14}--\eqref{optim:cond},
with values from Table~\ref{tab:param}, weights $B_1=B_2=40$,
and the fractional order derivatives $\alpha=1.0$ and $0.30$.}
\label{fig:states_alpha 023}
\end{figure}

\begin{figure}[!htb]
\centering
\begin{subfigure}[b]{0.46\textwidth}\centering
\includegraphics[scale=0.46]{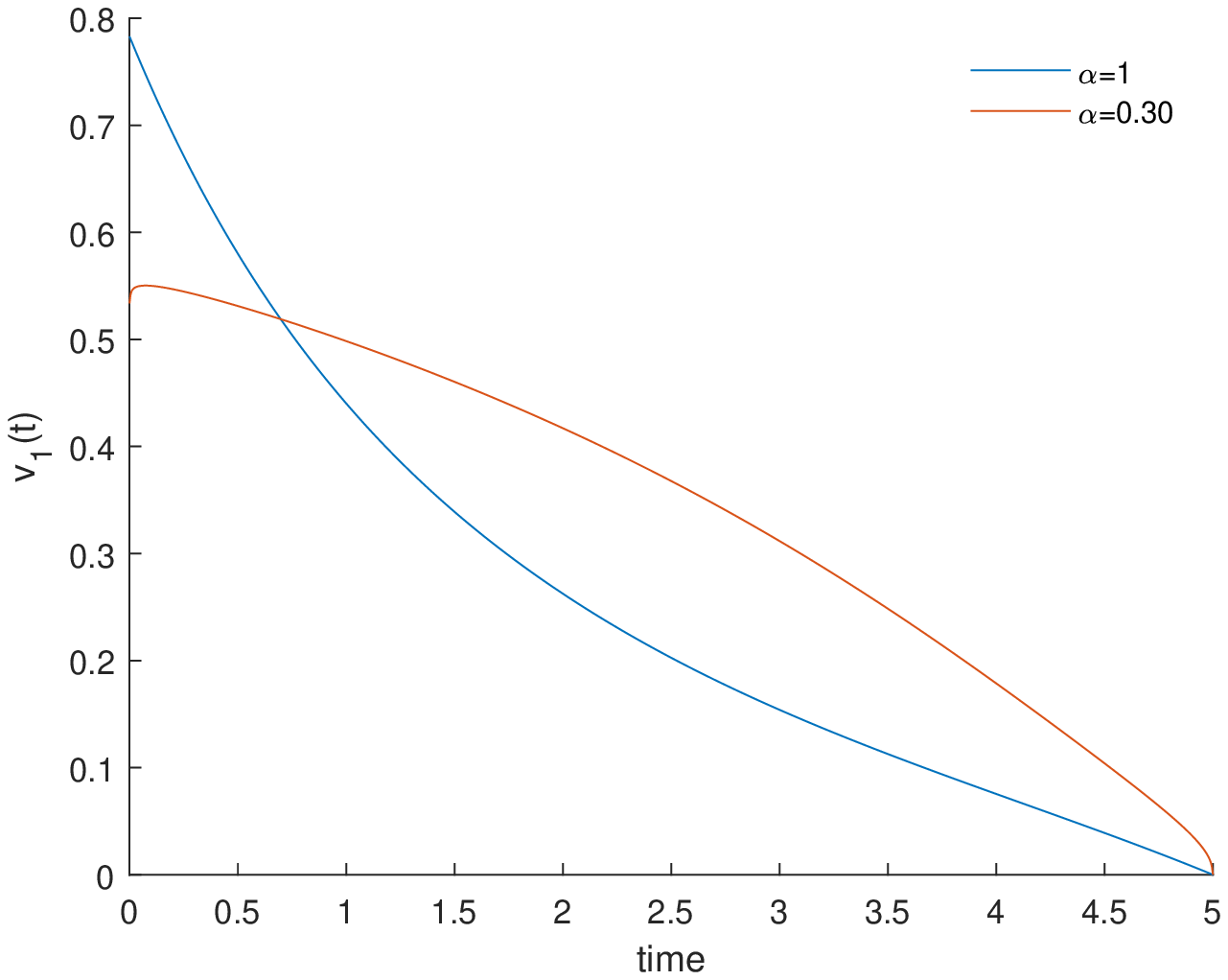}
\caption{Optimal control $v_1$.}\label{fig:v1_alpha023}
\end{subfigure}\hspace*{1cm}
\begin{subfigure}[b]{0.46\textwidth}
\centering
\includegraphics[scale=0.46]{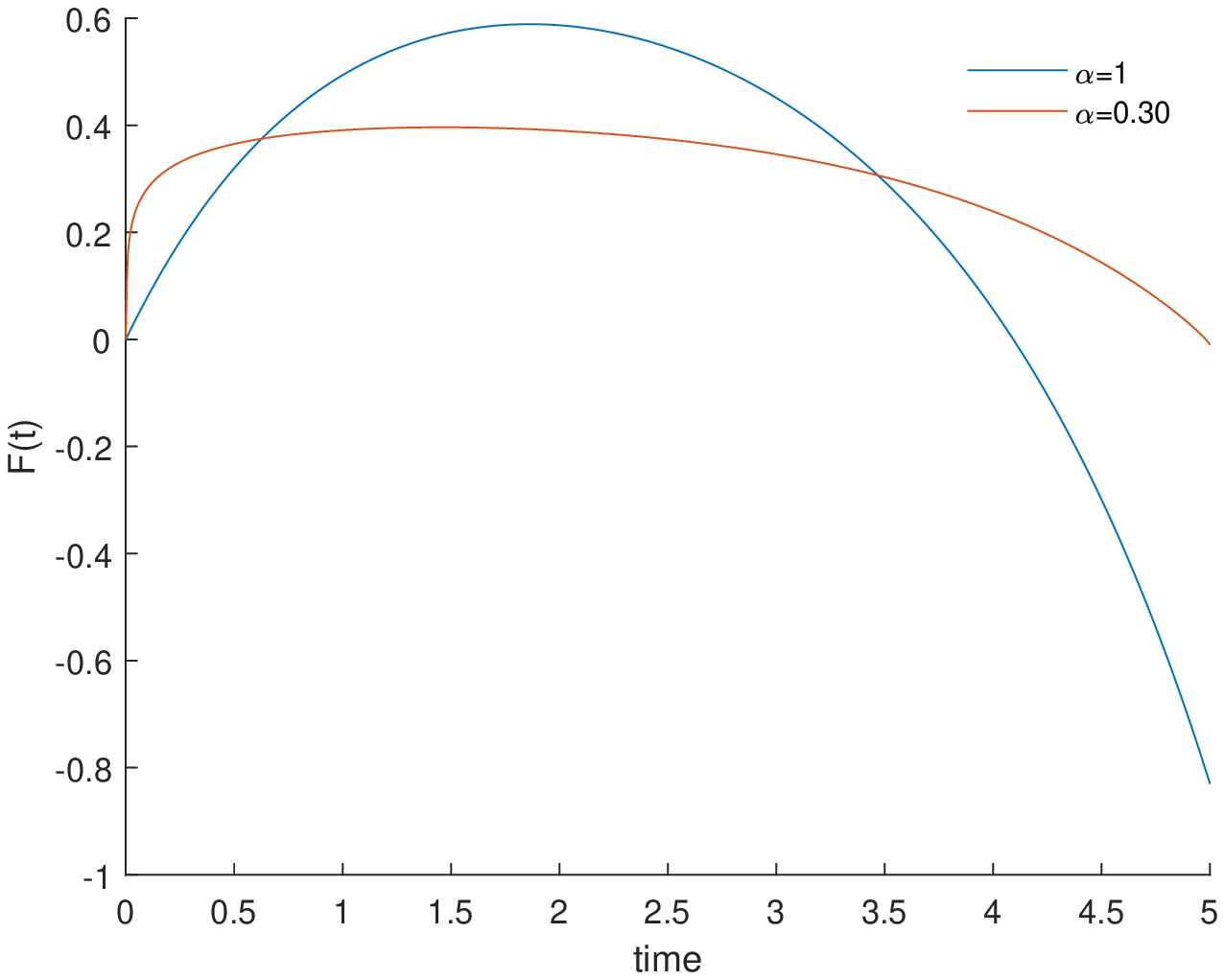}
\caption{Efficacy function $F(t)$ \eqref{efficacy_function}.}\label{fig:efficacy:023}
\end{subfigure}
\caption{Control function $v_1(t)$ and efficacy function $F(t)$ associated
to the FOCP  \eqref{14}--\eqref{optim:cond}, with values from Table~\ref{tab:param},
weights $B_1=B_2=40$, and the fractional order derivatives $\alpha=1.0$  and $0.30$.}
\label{fig:v1__F:023}
\end{figure}


\section{Conclusion}
\label{sec:conc}

We investigated the dynamics involved in HIV-AIDS infection 
by means of a fractional SICA model with Caputo's fractional derivatives.
Besides determining the points of stability of the system, we also applied 
fractional optimal control to virtualize determined scenarios and choose 
a strategy that minimizes the spreading of the disease.

The infection process was modelled by a general functional response.
This general incidence function $f(S,I)$ was used
to compute the basic reproduction number under biologically
reasonable hypothesis. Under such hypothesis, 
function $f$ covers many types of incidence functions 
existing in the literature, such as the bilinear incidence rate, 
saturated incidence rate, Beddington--DeAngelis, Crowley--Martin, 
and Hattaf--Yousfi functional responses. 
In numerical simulations, we have chosen 
the bilinear incidence rate $f(S,I)=\beta S$, 
which has shown a better fitting to real data of Morocco.
 
Stability and instability of equilibrium points were determined in terms 
of the basic reproduction number. Then, a fractional optimal control system 
was formulated and the best strategy for minimizing
the spread of the disease into the population
was determined through numerical simulations based
on the derived necessary optimality conditions.

Application of optimal control to the fractional SICA model shows 
that HIV treatment is effective on the reduction of infected individuals. 
Also, our results show that treating people with AIDS symptoms is useless 
when we can act over infected people with ART treatment. The modification of the value
of the fractional derivative order, $\alpha$, corresponds to variations of solutions 
to the fractional optimal control problem. When treatment becomes expensive, 
the fractional model ($\alpha<1$) is more appropriated due to its effectiveness
and because it provides control measures not as expensive as the ones given by
the classical model.


\section*{Acknowledgements}

Rosa was funded by The Portuguese Foundation for Science and Technology
(FCT -- Funda\c{c}\~ao para a Ci\^encia e a Tecnologia) through national
funds under the project UIDB/EEA/50008/2020;
Torres by FCT through CIDMA, reference UIDB/04106/2020.
The authors would like to acknowledge the comments and suggestions from three
anonymous reviewers, which helped them to enriched their work. 



\end{document}